\newcommand{\RO}{\mathsf{RO}}
\newcommand{\ROU}{\mathcal{RO}}
\newcommand{\ORO}{\mathsf{O}\ROU}
\newcommand{\inv}{^{-1}}
\newcommand{\reg}{^{\bot\bot}}
\newcommand{\sset}{\subseteq}
\newcommand{\rset}{\supseteq}
\newcommand{\me}{\wedge}
\newcommand{\bigme}{\bigwedge}
\newcommand{\jo}{\vee}
\newcommand{\bigjo}{\bigvee}
\newcommand{\orclo}{\mathord{\downarrow}}
\newcommand{\cat}{\mathbf}
\newcommand{\concord}{\mathord{\mathrel{\rotatebox[origin=c]{90}{$\twoheadrightarrow$}}}}
\newcommand{\orint}{\mathord{\mathrel{\rotatebox[origin=c]{90}{$\rightarrowtail$}}}}
\begin{document}

\begin{frontmatter}
  \title{Choice-Free de Vries Duality}
  \author{Guillaume Massas}\footnote{gmassas@berkeley.edu}
  \address{University of California, Berkeley}

  \begin{abstract}
  De Vries duality generalizes Stone duality between Boolean algebras and Stone spaces to a duality between de Vries algebras (complete Boolean algebras equipped with a subordination relation satisfying some axioms) and compact Hausdorff spaces. This duality allows for an algebraic approach to region-based theories of space that differs from point-free topology. Building on the recent choice-free version of Stone duality developed by Bezhanishvili and Holliday, this paper establishes a choice-free duality between de Vries algebras and a category of de Vries spaces. We also investigate connections with the Vietoris functor on the category of compact Hausdorff spaces and with the category of compact regular frames in point-free topology, and we provide an alternative, choice-free topological semantics for the Symmetric Strict Implication Calculus of Bezhanishvili et al.
  \end{abstract}

  \begin{keyword}
  Duality theory, de Vries algebras, point-free topology.
  \end{keyword}
 \end{frontmatter}

\section{Introduction}

Stone's \cite{stone1936theory} representation of Boolean algebras as clopen sets of compact, Hausdorff and zero-dimensional topological spaces has had a profound influence on the study of interactions between logic, algebra and topology. The realization that some properties of topological spaces could be retrieved by considering the algebraic properties of their lattices of open sets led to the development of \textit{point-free} topology \cite{Johnstone,johnstone1983point,picado2011frames}, in which open sets are taken as basic elements of a \textit{frame} rather than defined as sets of points. Stone's representation theorem, and therefore Stone duality, relies on the Boolean Prime Ideal Theorem (BPI), a fragment of the Axiom of Choice. By contrast,  the point-free approach has a more constructive flavor: even in the absence of the Axiom of Choice, the open set functor $\Omega$ mapping a topological space to its lattice of open sets has an adjoint functor $pt$, mapping a frame to its set of ``points'' endowed with a natural Stone-like topology. But the restriction of this adjunction to Stone spaces and compact zero-dimensional frames is only a duality under (BPI).  In \cite{hol19}, a choice-free duality between Boolean algebras and a category of $UV$-spaces has been developed. It is based on the simple but powerful idea that the appeal to (BPI) could be eliminated by working with a partially-ordered set of filters rather than a set of ultrafilters and by viewing these filters as partial approximations of a classical point. This approach has strong ties to both possibility semantics in modal logic \cite{holposs,Hol16,holliday2019complete} and the Vietoris functor on Stone spaces \cite{vietoris1922bereiche} and provides a \textit{semi-pointfree} approach, i.e., both spatial \textit{and} choice-free, to the representation of algebraic objects in \textit{semi-constructive mathematics}, i.e., mathematics carried out in $ZF+DC$ \cite{massas2022semi,schechter1996handbook}.

In \cite{devries1962compact}, de Vries generalized Stone duality to a duality between de Vries algebras (complete Boolean algebras equipped with a \textit{subordination} relation) and compact Hausdorff spaces. Just like Stone, de Vries used (BPI) in his representation of complete compingent algebras as the regular open sets of a compact Hausdorff space. On the point-free side, Isbell \cite{isbell1972atomless} showed that the $\Omega$-$pt$ adjunction restricts to a duality between compact Hausdorff spaces and compact regular frames, also under the assumption of (BPI). This leaves open the question of whether a choice-free duality between these algebraic categories and a category of topological spaces can be defined. 

In this paper, we show that this is indeed possible by generalizing the approach of \cite{hol19}. Just like Bezhanishvili and Holliday, we work with a poset of filters rather than with a set of maximal filters, and we define our dual spaces both in terms of their topological properties and in terms of order-theoretic aspects of the induced specialization order. We also show how the spaces we define naturally relate to the Vietoris functor on compact Hausdorff spaces and compact regular frames. We take this as evidence of the naturality and fruitfulness of this semi-pointfree approach, in which the basic ``points'' of our spaces are the closed sets of the standard, non-constructive duality.

The paper is organized as follows. After reviewing some background on de Vries algebras, compact regular frames and $UV$-spaces (Section \ref{back}), we provide a choice-free representation of any de Vries algebra as the regular open sets of some topological space (Section \ref{rep}). In Section \ref{space}, we characterize the choice-free duals of de Vries algebras, which we call $dV$-spaces. Section \ref{morph} deals with morphisms and ends with our main result, a choice-free dual equivalence between the category of de Vries algebras and the category of $dV$-spaces. In Section \ref{hyp}, we connect our duality to point-free topology and provide an alternative characterization of $dV$-spaces via the Vietoris functor on compact regular frames. Finally, Section \ref{app} lists two applications of this duality: a choice-free analogue of Tychonoff's Theorem for compact Hausdorff spaces and a choice-free topological semantics for the system $\mathsf{S^2IC}$ introduced in \cite{bezhanishvili2019strict}.

\section{Background}\label{back}
In this section, we briefly recall the de Vries and Isbell dualities for compact Hausdorff spaces as well as the choice-free Stone duality between Boolean algebras and $UV$-spaces presented in \cite{hol19}. We start by fixing some notation that we will use throughout. Let $L$ be a complete lattice and $(X, \tau)$ be a topological space. 

\begin{enumerate}
    \item When no confusion arises, we write $\leq$ to designate the order on $L$. We designate the meet and join operations on $L$ by $\me$ and $\jo$ respectively, and, whenever $L$ is pseudo-complemented, we use $\neg$ for the pseudo-complement operation.
    \item We will designate (a subset of) the set of all maximal filters on $L$ by $X_L$ and (a subset of) the set of all filters on $L$ by $S_L$.
    \item By a Stone-like topology on a set $Y$ of filters of $L$, we mean the topology generated by the sets of the form $\widehat{a} = \{F \in Y \mid a \in F\}$, and we will usually designate such a topology by $\sigma$.
    \item For any $U \sset X$, we write $-U$ for $X \setminus U$, $\overline{U}$ for the closure of $U$ and $U^\bot$ for $-\overline{U}$. We write $\mathsf{CO}(X)$ for the set of compact open subsets of $X$ and $\RO(X)$ for the Boolean algebra of regular open subsets of $X$, i.e., subsets $U$ such that $U\reg = U$.
    \item The \textit{specialization preorder} on $(X, \tau)$ is represented by the symbol $\leq$ when no confusion arises, and it is defined as $x \leq y$ iff $x \in U$ implies $y \in U$ for every $U \in \tau$.
    \item The \textit{up-set topology} on $X$ is the topology generated by the set of all upward closed subsets in the specialization preorder. Given $ U \sset X$, we let $\orint U$ be the interior of $U$ in the up-set topology, and $\orclo U$ the closure of $U$. We write $\ROU(X)$ for the Boolean algebra of order-regular open subsets of $X$, i.e., subsets $U$ such that $\orint \orclo U = U$, and $\mathsf{CO}\ROU(X)$ for $\mathsf{CO}(X)\cap\ROU(X)$.
\end{enumerate}
\subsection{De Vries Algebras} \label{devriesdu}

De Vries algebras were introduced in \cite{devries1962compact} as an algebraic dual to compact Hausdorff spaces. 

\begin{definition}
A \textit{compingent algebra} is a pair $(B, \prec)$ such that $B$ is a Boolean algebra with induced order $\leq$, and $\prec$ is a relation on $B \times B$ satisfying the following set of axioms:

\begin{description}
    \item[(A1)] $1 \prec 1$;
    \item[(A2)] $a \prec b$ implies $a \leq b$;
    \item[(A3)] $a \leq b \prec c \leq d$ implies $a \prec d$;
    \item[(A4)] $a \prec b$ and $a \prec c$ together imply $a \prec b \me c $;
    \item[(A5)] $a \prec b$ implies $\neg b \prec \neg a$;
    \item[(A6)] $a \prec c$ implies that there is $b \in B$ such that $a \prec b \prec c$;
    \item[(A7)] $a \neq 0$ implies that there is $b \neq 0 \in B$ such that $b \prec a$.
\end{description}

A \textit{de Vries algebra} is a compingent algebra $V = (B, \prec)$ such that $B$ is a complete Boolean algebra. It is \textit{zero-dimensional} if for any $a \prec b \in V$ there is $c \in V$ such that $a \prec c \prec c \prec b$.
\end{definition}

Compingent algebras constitute a specific kind of \textit{contact algebras}, Boolean algebras equipped with a binary relation of subordination satisfying (A1)-(A5). One motivation for contact algebras is to develop a region-based theory of space \cite{dimov2006contact,lando2019calculus}, according to which regions of space form a Boolean algebra and a region $a$ is subordinated to a region $b$ precisely if $b$ completely surrounds $a$. For more on contact  and subordination algebras, we refer the reader to \cite{bezhanishvili2019strict,bezhanishvili2017irreducible,dimov2017generalization,fedorchuk1973boolean}.

\begin{definition}
Let $V = (B, \prec)$ be a de Vries algebra. For any filter $F$ on $B$, let $\concord F = \{a \in F \mid \exists b \in F : b \prec a \}$. A \textit{concordant filter} on $V$ is a filter $F$ such that $\concord F = F$. An \textit{end} is a maximal concordant filter. 
\end{definition}

The dual space of a de Vries algebra $V$ is obtained by taking the set $X_V$ of all ends of $V$ and endowing it with the Stone-like topology $\sigma$ generated by all sets of the form $\{ p \in X_V \mid a \in p\}$ for some $p \in V$. Conversely, the dual de Vries algebra of a compact Hausdorff space $(X,\tau)$ is the complete Boolean algebra $\RO(X)$ of regular open sets, with the subordination relation $\sqsubset$ given by $U \sqsubset V$ iff $\overline{U} \sset V$.

\begin{theorem}[\cite{devries1962compact}, Thm.~I.4.3-5]
For any de Vries algebra $V = (B, \prec)$, $(X_V, \sigma)$ is compact Hausdorff, and $(B, \prec)$ is isomorphic to $(\RO(X_V),\sqsubset)$. Conversely, for any compact Hausdorff space $(X, \tau)$, $(\RO(X),\sqsubset)$ is a de Vries algebra, and $(X,\tau)$ is homeomorphic to $(X_{(\RO(X),\sqsubset)},\sigma)$. 
\end{theorem}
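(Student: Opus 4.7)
The plan is to treat the two halves separately. For the forward direction, fix a de Vries algebra $V = (B, \prec)$. I first argue by Zorn's lemma on the poset of concordant filters that ends exist and that every concordant filter extends to an end. The sets $\widehat{a} = \{p \in X_V : a \in p\}$ form a basis for $\sigma$ because $\widehat{a} \cap \widehat{b} = \widehat{a \me b}$; this rests on the fact that the meet of two elements of a concordant filter is again concordant, which follows from (A3) and (A4).

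For Hausdorffness, I take distinct ends $p \neq q$, pick $a \in p \setminus q$, and use concordance of $p$ together with (A6) to obtain $b, c \in B$ with $b \in p$ and $b \prec c \prec a$. The key step is to conclude $\neg c \in q$: since $q$ is a maximal concordant filter with $c \notin q$, one shows that $q \cup \{\neg c\}$ generates a proper concordant filter, using (A5) applied to $c \prec a$ to maintain concordance and (A7) to rule out collapse, so that the extension must coincide with $q$. Then $\widehat{b}$ and $\widehat{\neg c}$ are disjoint open neighborhoods of $p$ and $q$. For compactness, the argument dualizes: any cover by basic opens without a finite subcover yields, via Boolean complements, a concordant filter extendable to an end lying outside every piece of the cover.

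Having established that $X_V$ is compact Hausdorff, I define $\phi : B \to \RO(X_V)$ by $\phi(a) = \widehat{a}$ and show it is a de Vries isomorphism. Each $\widehat{a}$ is regular open because $\overline{\widehat{a}} = -\widehat{\neg a}$ by the separation lemma above; $\phi$ is Boolean-homomorphic and injective by (A7); surjectivity uses completeness of $B$ and the fact that $\phi$ preserves arbitrary joins in $\RO(X_V)$, so that any $U \in \RO(X_V)$, being the regularization of a union of basic opens $\widehat{b_i}$, equals $\phi(\bigjo_i b_i)$. Finally, $a \prec b$ iff $\overline{\widehat{a}} \sset \widehat{b}$, with the forward direction using (A5) and (A6) and the reverse using the separation lemma together with compactness.

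For the converse, all de Vries axioms for $(\RO(X), \sqsubset)$ on a compact Hausdorff $X$ are routine except (A6), which amounts to normality of $X$ and is witnessed by interpolating an intermediate regular open set via Urysohn's lemma. The homeomorphism $\psi : X \to X_{\RO(X)}$ sends $x$ to $p_x = \{U \in \RO(X) : x \in U\}$: $p_x$ is a concordant filter by local regularity at $x$; an end because any strictly larger concordant filter $q$ would contain some $V$ with $x \notin \overline{V}$, forcing $\neg V \in p_x \sset q$ and contradicting properness; injective by Hausdorffness; surjective by compactness applied to $\{\overline{V} : V \in r\}$ for a prospective missing end $r$; and continuous because $\psi\inv[\widehat{U}] = U$. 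The hardest part of the plan is the maximality argument in the Hausdorff separation and its analogue in the surjectivity of $\phi$ and $\psi$: both require combining (A5), (A6), and maximality of ends in a way that goes beyond the Stone-duality playbook, where primality does most of the work.
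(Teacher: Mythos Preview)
The paper does not give its own proof of this statement; it is quoted as a classical result from de Vries's thesis and serves purely as background for the choice-free duality that is the paper's actual contribution. So there is no in-paper argument to compare against. Your sketch is essentially the standard one, and most of it is sound, but one step does not go through as written.

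Your Hausdorffness argument is correct in outline, though the appeal to (A7) is spurious. Once you have $a \notin q$, the set $\{e \me d : e \in q,\ \neg a \prec d\}$ is a concordant filter, and its properness follows from $a \notin q$ alone (if $e \me d = 0$ then $e \leq \neg d \prec a$, forcing $a \in q$); maximality of $q$ then gives $\neg c \in q$ whenever $c \prec a$. The converse-direction arguments (the de Vries axioms for $(\RO(X),\sqsubset)$, and injectivity, surjectivity, and continuity of $x \mapsto p_x$) are fine.

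The genuine gap is compactness. If a basic cover $\{\widehat{a_i}\}$ has no finite subcover, the Boolean complements $\{\neg a_i\}$ generate a proper filter $F$, but $F$ is \emph{not} concordant in general, and passing to $\concord F$ loses too much: there is no reason $\neg a_i \in \concord F$, so an end extending $\concord F$ may well contain some $a_i$. The standard repair is to extend $F$ to an ultrafilter $U$ (this is exactly where BPI enters) and set $p = \concord U$. Using (A5) and (A6) one checks that $\concord U$ is already an end: if $b \in q \setminus p$ for some concordant $q \supseteq p$, pick $d' \prec d \prec b$ in $q$; then $d \notin p$ forces $\neg d' \in U$, and interpolating $d' \prec e \prec d$ gives $\neg e \in U$ with $\neg e \prec \neg d'$, so $\neg d' \in p \sset q$, contradicting $d' \in q$. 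Finally $p$ avoids every $a_i$: if $a_i \in p$ then $c \prec a_i$ for some $c \in U$, hence $c \leq a_i$, while $\neg a_i \in F \sset U$ gives $c \me \neg a_i \neq 0$, a contradiction. Your one-line ``dualizes'' hides precisely this passage through an ultrafilter, without which the end outside every piece of the cover need not exist.
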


We now introduce the relevant notion of morphism between de Vries algebras.

\begin{definition}
Let $V_1 = (B_1,\prec_1)$ and $V_2 = (B_2,\prec_2)$ be de Vries algebras. A de Vries morphism from $V_1$ to $V_2$ is a function $h : B_1 \to B_2$ satisfying the following set of conditions:
\begin{description}
    \item[(V1)] $h(0) = 0$;
    \item[(V2)] $h(a \me b) = h(a) \me h(b)$;
    \item[(V3)] $a \prec_1 b$ implies $\neg h(\neg a) \prec_2 h(b)$;
    \item[(V4)] $h(a) = \bigjo\{h(b) \mid b \prec_1 a\}$.
\end{description}
Given two de Vries morphisms $h: V_1 \to V_2$ and $k:V_2 \to V_3$, their composition $k\star h: V_1 \to V_3$ is defined as the map $a \mapsto \bigjo\{kh(b) : b \prec_1 a\}$. 
\end{definition}

One easily verifies that de Vries morphisms preserve both the order $\leq$ and the subordination relation $\prec$. Given a de Vries morphism $h : V_1 \to V_2$, the map $h^* : X_{V_2} \to X_{V_1}$ given by $h^*(p) = \concord h\inv[p]$ for any end $p$ on $V_2$ is a continuous function. Conversely, for any continuous function $f: (X_1,\tau_1) \to (X_2,\tau_2)$, the map $f_*:\RO(X) \to \RO(Y)$ given by $f_*(U) = (f\inv[U])\reg$ for any regular open set $U$ is a de Vries morphism. This allowed de Vries to obtain the following:

\begin{theorem} \label{devries}
The category $\cat{deV}$ of de Vries algebras and de Vries morphisms between them is dually equivalent to the category $\cat{KHaus}$ of compact Hausdorff spaces and continuous maps between them. 
\end{theorem}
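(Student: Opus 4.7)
The plan is to build on the object-level correspondence already given by the previous theorem. What remains is to check that the morphism assignments $h \mapsto h^*$ and $f \mapsto f_*$ are well-defined, that they are functorial (with the $\star$-composition on the algebraic side), and that the object-level isomorphisms are natural with respect to these functors, yielding the asserted dual equivalence.

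For well-definedness, I would first argue that $\concord h\inv[p]$ is an end on $V_1$ whenever $p$ is an end on $V_2$. Concordance is immediate from the definition; for maximality I would proceed by contradiction: if a concordant filter $q$ on $V_1$ properly extended $\concord h\inv[p]$, I would push it across $h$ by forming the upward closure of $\{\neg h(\neg b) \mid b \in q\}$, show via (V3), (A6), and the meet-preservation of $h$ that this yields a concordant filter on $V_2$ properly containing $p$, and invoke maximality of $p$. Continuity of $h^*$ is then checked on the Stone-like subbasis via the identity $(h^*)\inv[\widehat{a}] = \bigcup \{\widehat{\neg h(\neg b)} \mid b \prec_1 a\}$, which uses (V3) and (A6). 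Dually, for $f_*$, axioms (V1)--(V3) reduce to routine properties of preimages and regular closure, while (V4) follows from the fact that in a compact Hausdorff space every regular open is the union of the regular opens whose closures it contains.

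Functoriality splits into two parts. Preservation of identities is immediate on both sides: ends are concordant, and regular opens are fixed by $(\cdot)\reg$. The crucial identity is compositionality $(k \star h)^* = h^* \circ k^*$: using $(k \star h)(a) = \bigjo \{kh(b) \mid b \prec_1 a\}$, I would show that $a \in \concord (k \star h)\inv[p]$ iff some $b \prec_1 a$ lies in $\concord h\inv[\concord k\inv[p]]$, with (V4) providing one direction and (A6) the other. The dual identity $(g \circ f)_* = f_* \star g_*$ reduces to showing $((gf)\inv[U])\reg = \bigjo \{(f\inv[V])\reg \mid V \sqsubset g_*(U)\}$ in $\RO(X)$, which follows from continuity of $f,g$ together with the same interpolation property invoked for (V4).

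Naturality of the object isomorphisms from the previous theorem is then a direct unwinding of definitions: the isomorphism $V \to (\RO(X_V),\sqsubset)$ sending $a \mapsto \widehat{a}\reg$ commutes with applying $(-)_*$ after $(-)^*$, and dually the homeomorphism $X \to X_{(\RO(X),\sqsubset)}$ sending $x \mapsto \{U \in \RO(X) \mid x \in U\}$ commutes with applying $(-)^*$ after $(-)_*$. The main obstacle I anticipate is the compositionality identity $(k \star h)^* = h^* \circ k^*$: because $\star$-composition is defined by a join indexed by $\prec_1$ rather than pointwise, reconciling it with the concordance closure of filter preimages requires the coordinated use of (A6), (V3), and (V4), and careful attention to how the operator $\concord$ interacts with arbitrary joins in $V_1$.
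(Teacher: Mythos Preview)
The paper does not prove this theorem. Theorem~\ref{devries} appears in the background section (Section~\ref{devriesdu}) and is simply stated as de~Vries's original result, with a reference to \cite{devries1962compact}. The surrounding text sketches only the definitions of $h^*$ and $f_*$ and then asserts that ``this allowed de~Vries to obtain'' the duality; no argument for well-definedness, functoriality, or naturality is supplied, as these are treated as known.

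Your proposal is therefore not comparable to anything in the paper: you are reconstructing a proof that the paper deliberately omits. As a sketch of the classical argument your outline is broadly sound, and you have correctly identified the genuinely delicate step, namely that $(k \star h)^* = h^* \circ k^*$ requires coordinating the join in the definition of $\star$ with the $\concord$-closure of filter preimages via (A6), (V3), and (V4). One point to be careful with in an actual write-up: your maximality argument for $\concord h^{-1}[p]$ pushes a putative proper extension $q$ forward along $h$, but to conclude you need not only that the resulting filter contains $p$ but that it \emph{properly} contains $p$, which requires an element witnessing $q \not\subseteq \concord h^{-1}[p]$ to survive the transfer; this is where (V3) and (V4) do real work and deserves to be spelled out.
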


\subsection{Compact Regular Frames}
Recall that a \textit{frame} is a complete lattice $L$ that satisfies the join-infinite distributive law, i.e., is such that $a \me \bigjo B = \bigjo \{a \me b \mid b \in B\}$ for any $a \in L$ and $B \sset L$. Frames are the central object of study of point-free topology, for which \cite{Johnstone,johnstone1983point,picado2011frames} are standard introductions. A frame $L$ is \textit{compact} if $1_L = \bigjo B$ for some $B \sset L$ implies that $1_L = \bigjo B'$ for some finite $B' \sset B$. A morphism between frames is a map preserving finite meets and arbitrary joins.

\begin{definition}
Let $L$ be a frame and $a,b \in L$. Then $a$ is said to be \textit{rather below} $b$ \cite[Def.~V.5.2]{picado2011frames}, denoted $a \prec b$, if $b \jo \neg a = 1_L$. A \textit{compact regular frame} is a compact frame $L$ such that for any $a \in L$, $a = \bigjo\{b \in L \mid b \prec a\}$.
\end{definition}

Given any topological space $(X, \tau)$, one can define its frame of open sets $\Omega(X)$. Conversely, given a frame $L$, one can define a Stone-like topology on the set of completely prime filters $pt(L)$. These constructions give rise to adjoint functors $\Omega$ and $pt$ between the categories $\cat{Frm}$ of frames and frame morphisms and $\cat{Top}$ of topological spaces and continuous functions. Assuming (BPI), Isbell \cite{isbell1972atomless} showed that this adjunction restricts to a duality in the specific case of compact regular frames:

\begin{theorem} \label{isbell}
The category $\cat{KRFrm}$ of compact regular frames is dually equivalent to  $\cat{KHaus}$.
\end{theorem}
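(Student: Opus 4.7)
The plan is to restrict the standard $\Omega \dashv pt$ adjunction between $\cat{Top}$ and $\cat{Frm}^{op}$ to the subcategories $\cat{KHaus}$ and $\cat{KRFrm}$ and to verify that both the unit and the counit become isomorphisms after this restriction. Since the morphism part of the equivalence then follows automatically from the adjunction, the argument reduces to checking that each functor lands in the appropriate subcategory and that $\eta$ and $\epsilon$ become isomorphisms.

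First I would handle the restriction of the functors. For $X$ compact Hausdorff, $\Omega(X)$ is evidently compact; regularity follows from normality, since for any $U \in \Omega(X)$ and $x \in U$ one finds $V \in \Omega(X)$ with $x \in V$ and $\overline{V} \sset U$, which gives $V \prec U$ in $\Omega(X)$ (the pseudocomplement $\neg V$ is $X \setminus \overline{V}$, so $U \jo \neg V = X$), and hence $U = \bigjo\{V \mid V \prec U\}$. For $L$ compact regular, $pt(L)$ is compact because a basic open cover $\{\widehat{a_i}\}$ corresponds to $\bigjo a_i = 1_L$, from which compactness of $L$ extracts a finite subcover. Hausdorffness uses regularity: for distinct completely prime filters $p \neq q$ with $a \in p \setminus q$, the equation $a = \bigjo\{b \mid b \prec a\}$ together with complete primality of $p$ produces $b \in p$ with $b \prec a$; since $a \jo \neg b = 1_L$ and $a \notin q$, one obtains $\neg b \in q$, so the basic opens $\widehat{b}$ and $\widehat{\neg b}$ are disjoint (as $b \me \neg b = 0$) and separate $p$ from $q$.

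Next I would show that $\eta$ and $\epsilon$ are isomorphisms. The unit $\eta_X: X \to pt(\Omega(X))$ sending $x$ to its open-neighborhood filter is a homeomorphism whenever $X$ is sober, and compact Hausdorff spaces are $T_2$ hence sober. The counit $\epsilon_L: L \to \Omega(pt(L))$ sending $a$ to $\widehat{a}$ is an isomorphism precisely when $L$ is spatial, so everything reduces to the claim that every compact regular frame is spatial.

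The main obstacle is exactly this last step, in which (BPI) is essentially used. Given $a \not\leq b$ in $L$, regularity provides some $c \prec a$ with $c \not\leq b$, and a Zorn's lemma argument then yields a filter $p$ of $L$ maximal among those containing $c$ while avoiding $b$. The delicate task is to argue that such a maximal $p$ must be completely prime: if $\bigjo d_i \in p$ but no individual $d_i$ lies in $p$, then by maximality the filter generated by $p \cup \{d_i\}$ meets $\{x \mid x \geq b\}$ for each $i$, and combining the interpolation property of $\prec$ with compactness of $L$ one extracts a contradiction by covering a suitable element of $p$ with finitely many of the $d_i$. Everything else in the theorem is formal, following from the adjunction and the standard sobriety/spatiality dictionary.
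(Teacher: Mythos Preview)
The paper does not prove this theorem; it is stated as a background result with a citation to Isbell \cite{isbell1972atomless}, so there is no in-paper proof to compare against. Your sketch follows the standard route from the point-free topology literature (restricting the $\Omega\dashv pt$ adjunction, sobriety of compact Hausdorff spaces for the unit, spatiality of compact regular frames for the counit), which is precisely what the paper alludes to when it says the adjunction ``restricts to a duality'' under (BPI).

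Two small remarks on your sketch. First, you invoke Zorn's Lemma to produce the separating point, but the paper emphasizes that only (BPI) is needed; the usual argument shows that in a compact frame every prime element is maximal (equivalently every prime filter is completely prime), so one only needs a prime filter separating $a$ from $b$, and that is exactly (BPI) for distributive lattices. Second, your completely-prime verification is too vague as written: the clean argument is that compactness plus regularity forces any prime filter $p$ to be completely prime, since if $\bigjo d_i \in p$ one takes $c \prec \bigjo d_i$ with $c \in p$ (by regularity and complete primality would follow), uses $\neg c \jo \bigjo d_i = 1$ and compactness to get $\neg c \jo d_{i_1}\jo\cdots\jo d_{i_n}=1$, hence $c \prec d_{i_1}\jo\cdots\jo d_{i_n}$ and primality of $p$ gives some $d_{i_k}\in p$. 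With those adjustments your outline is correct and matches the literature proof the paper is citing.
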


As an immediate consequence of Theorems \ref{devries} and \ref{isbell}, the categories $\cat{deV}$ and $\cat{KRFrm}$ are equivalent. This equivalence has also been given a direct description in \cite{bezhanishvili2012vries}, which has the advantage of being choice-free. Given a frame $L$, an element $a \in L$ is \textit{regular} if $\neg\neg a = a$. The \textit{Booleanization} of $L$ \cite{banaschewski1996booleanization}, denoted $B(L)$, is the subframe of all the regular elements of $L$. It is straightforward to verify that if $L$ is a compact regular frame, $B(L)$ equipped with the rather below relation $\prec$ is a de Vries algebra. In order to go from de Vries algebras to frames, we need the following definition:

\begin{definition}
Let $V = (B, \prec)$ be a de Vries algebra. An ideal on $B$ is \textit{round} if for any $a \in I$, there is $b \in I$ such that $a \prec b$.
\end{definition}

It is immediate to see that a proper ideal $I$ on a de Vries algebra $V$ is round if and only if its dual filter $I^\delta = \{\neg a \mid a \in I\}$ is concordant. Given a de Vries algebra $V$, its set of round ideals ordered by inclusion forms a compact regular frame $\mathfrak{R}(V)$. The equivalence between $\cat{KRFrm}$ and $\cat{deV}$ is then given by the following result:

\begin{theorem} \label{gur}
Any compact regular frame $L$ is isomorphic to $\mathfrak{R}(B(L))$. Conversely, any de Vries algebra $V$ is isomorphic to $B(\mathfrak{R}(V))$, and the maps $B$ and $\mathfrak{R}$ lift to an equivalence between $\cat{KRFrm}$ and $\cat{deV}$.
\end{theorem}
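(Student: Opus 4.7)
The plan is to exhibit natural isomorphisms $\eta_V \colon V \to B(\mathfrak{R}(V))$ and $\epsilon_L \colon L \to \mathfrak{R}(B(L))$ at the object level and then verify they extend to a categorical equivalence. Since $\mathfrak{R}(V)$ is asserted to be a compact regular frame and $B(L)$ with the rather below relation to be a de Vries algebra, the work splits cleanly into (i) constructing the two maps, (ii) checking each is iso in its category, and (iii) lifting to morphisms.

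For the first isomorphism, I would set $\eta_V(a) = \{b \in V \mid b \prec a\}$. Axioms (A3) and (A4) make this an ideal and (A6) makes it round, so $\eta_V(a) \in \mathfrak{R}(V)$. To see it is regular in $\mathfrak{R}(V)$, one computes the pseudocomplement: the frame $\mathfrak{R}(V)$ has $\neg I$ given by the round ideal of elements whose $\prec$-predecessors are $\me$-disjoint from everything in $I$, and (A5) together with (A7) forces $\neg\neg \eta_V(a) = \eta_V(a)$. Injectivity of $\eta_V$ uses (A7) applied to a nonzero element witnessing $a \neq b$; surjectivity sends a regular round ideal $I$ to $\bigjo I$, available because $V$ is complete. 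Preservation of $0$, finite meets, joins (computed by $\neg\neg$ in $B(\mathfrak{R}(V))$), and the subordination relation is then a direct unfolding of the axioms.

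For the second isomorphism, define $\epsilon_L(x) = \{a \in B(L) \mid a \prec x\}$, where $\prec$ is the rather below relation on $L$. Regularity of $L$ ensures $x = \bigjo\{a \in B(L) : a \prec x\}$, giving injectivity, while compactness of $L$ is used both to show that $\epsilon_L$ preserves binary meets (finite covers suffice) and to identify an arbitrary round ideal $I$ of $B(L)$ as $\epsilon_L(\bigjo I)$. Together these yield surjectivity onto $\mathfrak{R}(B(L))$ and confirm $\epsilon_L$ is a frame isomorphism.

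To extend to morphisms: for a frame morphism $f \colon L \to L'$ in $\cat{KRFrm}$, the map $f$ need not preserve regular elements, so set $B(f)(a) := \neg\neg f(a)$, and verify (V1)--(V4) using regularity and compactness. For a de Vries morphism $h \colon V \to V'$, define $\mathfrak{R}(h)(I)$ to be the round ideal generated by $h[I]$, which is a frame morphism by (V4). The main obstacle is the mismatch between ordinary composition on the frame side and the twisted composition $k \star h$ on the de Vries side; verifying $\mathfrak{R}(k \star h) = \mathfrak{R}(k) \circ \mathfrak{R}(h)$ requires showing that the $\prec$-approximants absorbed into the definition of $\star$ get reabsorbed when passing to round ideals, which ultimately reduces to axiom (A6) applied repeatedly. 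Naturality of $\eta$ and $\epsilon$ with respect to these assignments closes the argument and delivers the equivalence.
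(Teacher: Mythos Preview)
The paper does not supply a proof of this theorem: it appears in Section~\ref{back} as background, attributed to \cite{bezhanishvili2012vries}, and is simply stated without argument. So there is no ``paper's own proof'' to compare against.

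Your outline is a faithful reconstruction of the standard argument and the main ingredients are correct: the map $\eta_V(a) = \{b \mid b \prec a\}$ into $\mathfrak{R}(V)$ and the map $\epsilon_L(x) = \{a \in B(L) \mid a \prec x\}$ into $\mathfrak{R}(B(L))$ are exactly the isomorphisms used in the literature, and your identification of where (A6), (A7), regularity, and compactness enter is accurate. One small point worth tightening: when you say regularity of $L$ gives $x = \bigjo\{a \in B(L) \mid a \prec x\}$, regularity literally only gives $x = \bigjo\{b \in L \mid b \prec x\}$; you need the extra observation that $b \prec x$ implies $\neg\neg b \prec x$ (immediate since $\neg\neg\neg b = \neg b$) and $b \leq \neg\neg b$, so the join over regular elements suffices. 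On the morphism side, your description of $\mathfrak{R}(h)$ and $B(f)$ and the need to reconcile $\star$-composition with ordinary frame composition via (A6) is the right diagnosis of where the subtlety lies.
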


\subsection{$UV$-spaces}\label{chfstone}
We conclude this section by recalling the choice-free version of Stone duality presented in \cite{hol19}. 

\begin{definition}
A topological space $(X,\tau)$ is a \textit{$UV$-space} if it satisfies the following conditions:
\begin{enumerate}
    \item $(X,\tau)$ is compact and $T_0$;
    \item $\mathsf{CO}\ROU(X)$ is closed under $\cap$ and $-\orclo$ and forms a basis for $\tau$;
    \item Any filter on $\mathsf{CO}\ROU(X)$ is $\mathsf{CO}\ROU(x) = \{U \in \mathsf{CO}\ROU(X) \mid x \in U\}$ for some $x \in X$.
\end{enumerate}
 
\end{definition}

Given a Boolean algebra $B$, one considers the set $S_B$ of all filters on $B$, equipped with the usual Stone-like topology $\sigma$. It can then be showed without appealing to (BPI) that $UV$-spaces are the duals of Boolean algebras:

\begin{theorem}[\cite{hol19}, Thm.~5.4]
For any Boolean algebra $B$, $(S_B, \sigma)$ is a $UV$-space, and $B$ is isomorphic to $\mathsf{CO}\ROU(S_B)$. Conversely, for any $UV$-space $(X,\tau)$, $\mathsf{CO}\ROU(X)$ is a Boolean algebra, and $(X,\tau)$ is homeomorphic to $(S_{\mathsf{CO}\ROU(X)},\sigma)$.
\end{theorem}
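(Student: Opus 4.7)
The plan is to treat the two directions separately, leaning throughout on an explicit description of the specialization preorder on $S_B$ and of the map $a \mapsto \widehat{a}$. I first establish that $(S_B, \sigma)$ is a $UV$-space with $B \cong \mathsf{CO}\ROU(S_B)$, and then reverse the construction.

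Compactness and $T_0$-ness of $(S_B,\sigma)$ come essentially for free: the trivial filter $\{1_B\}$ belongs to $S_B$ and its only basic neighborhood is $\widehat{1_B} = S_B$, so any open cover of $S_B$ is forced to contain $S_B$ itself as a member; distinct filters are separated by some $\widehat{a}$. A direct check shows that the specialization preorder on $S_B$ is set-inclusion of filters.

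The core of the argument is to identify $\mathsf{CO}\ROU(S_B)$ with $\{\widehat{a} \mid a \in B\}$. Each $\widehat{a}$ is compact, since the principal filter $\mathord{\uparrow}a \in \widehat{a}$ forces any basic open cover to contain some $\widehat{b}$ with $a \leq b$ and hence $\widehat{a} \sset \widehat{b}$. Each $\widehat{a}$ is also an upset in the specialization order, and the key computation is
\[
\orclo \widehat{a} \;=\; \{F \in S_B \mid \neg a \notin F\} \;=\; S_B \setminus \widehat{\neg a},
\]
because a filter $F$ lies below some filter containing $a$ precisely when $F \cup \{a\}$ generates a proper filter, i.e., $\neg a \notin F$. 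The same filter-extension dictionary then yields $\orint \orclo \widehat{a} = \widehat{a}$, and an almost identical computation shows $\orint \orclo (\widehat{a} \cup \widehat{b}) = \widehat{a \jo b}$. Any $U \in \mathsf{CO}\ROU(S_B)$ is a finite union $\bigcup_{i \in J} \widehat{a_i}$ by compactness together with the fact that the $\widehat{a}$'s form a basis, and order-regular-openness of $U$ then forces $U = \widehat{\bigjo_{i \in J} a_i}$. Since $\widehat{a \me b} = \widehat{a} \cap \widehat{b}$ and $\widehat{\neg a} = S_B \setminus \orclo \widehat{a}$, the map $a \mapsto \widehat{a}$ is a Boolean isomorphism $B \to \mathsf{CO}\ROU(S_B)$. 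The remaining $UV$-space conditions follow at once: $\mathsf{CO}\ROU(S_B)$ is closed under $\cap$ and $-\orclo$ by these identities, the $\widehat{a}$'s form a basis by construction, and a filter on $\mathsf{CO}\ROU(S_B)$ corresponds bijectively to a filter $F$ on $B$, with that filter being precisely $\mathsf{CO}\ROU(F) = \{\widehat{a} \mid a \in F\}$.

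For the converse, I would define $\phi : X \to S_{\mathsf{CO}\ROU(X)}$ by $\phi(x) = \mathsf{CO}\ROU(x)$. Condition~(3) of a $UV$-space gives surjectivity, $T_0$-ness together with $\mathsf{CO}\ROU(X)$ being a basis gives injectivity, and for any $U \in \mathsf{CO}\ROU(X)$ one has $\phi\inv[\widehat{U}] = U$ and $\phi[U] = \widehat{U}$, which delivers both continuity and openness. The main obstacle throughout is the $\orint\orclo$ computation in the first direction: it requires translating simultaneously between the order-theoretic picture (upsets in the specialization preorder) and the filter-theoretic picture (which filters admit extensions containing a given element). Once that dictionary is set up, the rest of the argument is essentially bookkeeping.
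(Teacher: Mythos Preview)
The paper does not give its own proof of this theorem: it is quoted as background from \cite{hol19} (Thm.~5.4) and left unproved. So there is no in-paper argument to compare your proposal against.

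That said, your proposal is correct and is essentially the standard argument one finds in the cited source. The key computations you isolate---that the specialization preorder is inclusion, that $\orclo\widehat{a} = S_B \setminus \widehat{\neg a}$ via the filter-extension criterion, and that therefore $\orint\orclo\widehat{a} = \widehat{a}$ and $\orint\orclo(\widehat{a}\cup\widehat{b}) = \widehat{a\jo b}$---are exactly the ones needed, and your use of compactness to reduce an arbitrary $U \in \mathsf{CO}\ROU(S_B)$ to a finite union of basic opens before applying order-regularity is the standard move. The converse direction via $\phi(x) = \mathsf{CO}\ROU(x)$ is likewise the expected construction, and your justification of bijectivity and bicontinuity from the three $UV$-space axioms is accurate. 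One small point worth making explicit: in verifying $\orint\orclo\widehat{a} \sset \widehat{a}$, you implicitly use that if $a \notin F$ then $F \cup \{\neg a\}$ generates a proper filter (not merely that $F \cup \{a\}$ might fail to); this is immediate since $a \notin F$ means no $c \in F$ lies below $a$, hence $c \me \neg a \neq 0$ for all $c \in F$, but it is the step most likely to trip up a reader and deserves a sentence.
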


Moving on to morphisms, recall that a spectral map between two topological spaces is a map such that the preimage of any compact open set is compact open.

\begin{definition}
Given two $UV$-spaces $(X,\tau_1)$ and $(Y,\tau_2)$ with induced specialization orders $\leq_1$ and $\leq_2$, a $UV$-map from $(X,\tau_1)$ to $(Y,\tau_2)$ is a spectral map $f: X \to Y$ that is also a p-morphism with respect to $\leq_1$ and $\leq_2$, i.e., for any $x \in X$, $y \in Y$, if $f(x) \leq_2 y$, then there is $x'\geq_1 x$ such that $y = f(x')$.
\end{definition}

Any $UV$-map $f : (X,\tau_1) \to (Y,\tau_2)$ gives rise to a Boolean algebra homomorphism $f_* : \mathsf{CO}\ROU(Y) \to \mathsf{CO}\ROU(X)$ given by $f_*(U) = f\inv[U]$ for any $U \in \ROU(Y)$. Conversely, for any Boolean homomorphism $h: B_1 \to B_2$, the map $h^*: (S_{B_2},\sigma_2) \to (S_{B_1},\sigma_1)$ given by $h^*(F) = h\inv[F]$ for any filter $F$ on $B_2$ is a $UV$-map. This yields the following result, which, unlike Stone duality, does not rely on the Axiom of Choice: 

\begin{theorem}
The category $\cat{BA}$ of Boolean algebras and Boolean homomorphisms between them is dually equivalent to the category $\cat{UV}$ of $UV$-spaces and $UV$-maps between them. 
\end{theorem}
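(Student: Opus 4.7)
The plan is to use the object-level correspondence established in the previous theorem and reduce the dual equivalence to three tasks: verifying that $f \mapsto f_*$ and $h \mapsto h^*$ are well-defined, checking functoriality, and establishing naturality of the candidate unit $\eta_B\colon B \to \mathsf{CO}\ROU(S_B)$, $a \mapsto \widehat{a}$, and counit $\epsilon_X\colon X \to S_{\mathsf{CO}\ROU(X)}$, $x \mapsto \mathsf{CO}\ROU(x)$. The prior theorem already shows these are bijections/homeomorphisms at the object level, so once functoriality of both assignments is in place they will lift to a natural dual isomorphism.

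The first step I would tackle is well-definedness of $h^*$. That $h\inv[F]$ is a filter uses only that $h$ preserves finite meets and $1$. Spectrality is immediate from the identity $(h^*)\inv[\widehat{a}] = \widehat{h(a)}$, since the sets $\widehat{a}$ form a basis of compact open sets of $(S_B,\sigma)$. For the $p$-morphism condition with respect to the specialization order (which on $(S_B,\sigma)$ coincides with filter inclusion), I would take as a witness, whenever $h\inv[F]\sset G$, the filter $F' = \langle F \cup h[G]\rangle$; properness of $F'$ follows because the existence of $f\in F$ and $g \in G$ with $f \me h(g)=0$ forces $h(\neg g)\in F$ and hence $\neg g \in h\inv[F]\sset G$, contradicting $g \in G$. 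The identity $h\inv[F']=G$ is then a short computation using that $h$ is a Boolean homomorphism.

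Next I would verify that $f_*(U) = f\inv[U]$ lands in $\mathsf{CO}\ROU(X)$ and preserves the Boolean operations. Preservation of $1$ and $\me$ is trivial; the substantive point is preservation of the Boolean complement, which in $\mathsf{CO}\ROU$ is $-\orclo$. This reduces to showing $f\inv[\orclo U] = \orclo f\inv[U]$: the inclusion $\rset$ follows because any continuous map is monotone for the specialization order, while the inclusion $\sset$ is precisely where the $p$-morphism clause enters --- given $x$ with $f(x) \le y \in U$, the $p$-morphism condition supplies some $z \ge x$ with $f(z) = y \in U$, so $x \in \orclo f\inv[U]$. Once complement is preserved, preservation of $0$ and of the join in $\mathsf{CO}\ROU$ is automatic.

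The remaining steps are routine. Functoriality of both assignments follows from the fact that inverse image commutes with composition and preserves identities. Naturality of $\eta$ and $\epsilon$ reduces to checking $\widehat{h(a)} = (h^*)\inv[\widehat{a}]$ and $\mathsf{CO}\ROU(f(x)) = (f_*)\inv[\mathsf{CO}\ROU(x)]$, both of which unwind directly from the definitions. The main obstacle I expect is the complement-preservation argument for $f_*$, where the $p$-morphism property, the specialization order, and the non-standard Boolean structure on $\mathsf{CO}\ROU$ must all be used in concert; the surrounding functoriality and naturality verifications are essentially bookkeeping.
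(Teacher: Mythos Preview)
The paper does not supply its own proof of this theorem: it is stated in Section~\ref{chfstone} as background material imported from \cite{hol19}, so there is no in-paper argument to compare against. That said, your outline is a correct and standard route to the result. The verification that $h^*$ is a $p$-morphism via the filter $F' = \langle F \cup h[G]\rangle$ is right (the step you flag as a ``short computation'' does go through: from $h(b)\geq f\wedge h(g)$ one gets $f\leq h(b\vee\neg g)$, hence $b\vee\neg g\in h^{-1}[F]\subseteq G$, and then $b\geq (b\vee\neg g)\wedge g\in G$), and the identity $f^{-1}[\orclo U]=\orclo f^{-1}[U]$ is exactly the content of the $p$-morphism condition combined with monotonicity, as you say. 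Functoriality and naturality are indeed routine once these two points are in hand.
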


\section{A Choice-free Representation for de Vries Algebras}\label{rep}

In this section, we complete the first step of the duality by obtaining a choice-free representation of any de Vries algebra as the regular open sets of some topological space. Our approach combines the techniques of Sections \ref{devriesdu} and \ref{chfstone} in a natural way.

\begin{definition}
Let $V = (B, \prec)$ be a de Vries algebra. The \textit{dual filter space} of $V$ is the topological space $(S_V,\sigma)$, where:
\begin{itemize}
    \item $S_V$ is the set of all concordant filters on $V$;
    \item $\sigma$ is the Stone-like topology generated by $\{\widehat{a} = \{F \in S_V \mid a \in F\} \mid a \in V\}$.
\end{itemize}
\end{definition}

The following two lemmas will help us investigate the structure of the space of concordant filters on a de Vries algebra.

\begin{lemma} \label{filt}
Let $V = (B, \prec)$ be a de Vries algebra. Then:
\begin{enumerate}
    \item For any $a \neq 0$, $F = \{c \in V \mid a \prec c\}$ is a concordant filter.
    \item If $F$ and $G$ are concordant filters and $c \me d \neq 0$ for any $c \in F, d\in G$, then the set $H = \{c \me d \mid c \in F, d \in G\}$ is a concordant filter.
\end{enumerate}
\end{lemma}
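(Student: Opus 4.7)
For both parts, the plan is to verify the filter axioms (nonempty, proper, upward-closed, meet-closed) and then the concordance condition $\concord F = F$.

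For Part 1, I would argue as follows. To see that $F$ is nonempty and proper, note that (A1) together with (A3) gives $a \prec 1$, so $1 \in F$; while $0 \in F$ would force $a \prec 0$, hence $a \leq 0$ by (A2), contradicting $a \neq 0$. Upward closure is immediate from (A3): if $a \prec c$ and $c \leq d$, then $a \prec d$. Meet closure is exactly (A4). For concordance, take $c \in F$, so $a \prec c$; (A6) yields some $b$ with $a \prec b \prec c$, and then $b \in F$ witnesses $c \in \concord F$.

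For Part 2, nonemptiness follows since $1 \in F \cap G$ gives $1 = 1 \me 1 \in H$, and properness is the assumption that $c \me d \neq 0$ for all relevant $c,d$. Meet closure is straightforward: $(c_1 \me d_1) \me (c_2 \me d_2) = (c_1 \me c_2) \me (d_1 \me d_2) \in H$ since $F$ and $G$ are closed under meets. The subtler point is upward closure: given $c \me d \leq h$ with $c \in F, d \in G$, I would set $c' = c \jo h$ and $d' = d \jo h$, noting that $c' \in F$ and $d' \in G$ by upward closure, and then use Boolean distributivity together with $c \me d \leq h$ to compute $c' \me d' = (c \me d) \jo h = h$, so $h \in H$. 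Finally, for concordance, given $c \me d \in H$, pick $c' \in F$ with $c' \prec c$ and $d' \in G$ with $d' \prec d$ (by concordance of $F$ and $G$); then (A3) gives $c' \me d' \prec c$ and $c' \me d' \prec d$, and (A4) combines these to $c' \me d' \prec c \me d$, showing $c \me d \in \concord H$.

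The main obstacle is the upward closure step in Part 2, since $H$ is defined as the \emph{image} of the pairwise-meet operation rather than as the filter generated by $F \cup G$, so one must explicitly exhibit a factorization $h = c' \me d'$ for an arbitrary element $h$ above some $c \me d$. The trick $c' = c \jo h, d' = d \jo h$ uses Boolean distributivity in an essential way, and this is the one spot where the argument relies on $B$ being a Boolean algebra rather than merely a distributive lattice with a subordination. Everything else is a direct unpacking of the compingent axioms (A1)--(A7).
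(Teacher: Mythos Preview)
Your proof is correct and follows the same approach as the paper. The paper's own proof is terser: for part (ii) it simply says ``it is routine to verify that $H$ is a proper filter'' and then gives the concordance argument exactly as you do (invoking (A4), implicitly after (A3), to get $c' \me d' \prec c \me d$). Your contribution is that you actually carry out the ``routine'' verification, and in particular you correctly isolate upward closure as the only nontrivial step and handle it with the Boolean trick $c' = c \jo h$, $d' = d \jo h$; the paper leaves this entirely to the reader.
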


\begin{proof}    For part (i), by (A3), $F$ is upward-closed, and by (A4), it is downward directed. To verify that $\concord F = F$, note that if $a \prec c$, then by (A6) there is $c'$ such that $a \prec c' \prec c$, so $c \in \concord F$.
    
    For part (ii), let $H = \{c \me d \mid c \in F, d \in G\}$. I claim that $H$ is a concordant filter. It is routine to verify that $H$ is a proper filter. To see that $\concord H = H$, take $c \in F$ and $d \in G$. Since $F$ and $G$ are concordant there are $c' \prec c$ in $F$ and $d' \prec d$ in $G$. Thus $c' \me d' \in H$ and $c' \me d' \prec c \me d$ by (A4), which means that $c \me d \in \concord H$. This shows that $H \sset \concord H$, and the converse is immediate from (A2).
\end{proof}

\begin{lemma}\label{reg}
Let $V = (B, \prec)$ be a de Vries algebra, $a \in V$ and $F$ a concordant filter on $V$. If $a \notin F$, then there is a concordant filter $G \rset F$ such that for any concordant filter $H \rset G$, $a \notin H$. 
\end{lemma}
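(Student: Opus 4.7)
The plan is to construct $G$ explicitly as the filter generated by $F \cup \{\neg b \mid b \prec a\}$. The motivation comes from the blocking condition: if a concordant $H \rset G$ contained $a$, concordance of $H$ would yield some $b \in H$ with $b \prec a$, but $\neg b$ would already lie in $G \sset H$, so $H$ would contain $b \me \neg b = 0$, contradicting its properness. Thus once I establish that $G$ is a proper concordant filter extending $F$, the conclusion is immediate.

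First, I would show that $G$ is proper. A typical element of $G$ lies above some finite meet of generators $c \me \neg b_1 \me \cdots \me \neg b_n = c \me \neg(b_1 \jo \cdots \jo b_n)$, where $c \in F$ (using that $F$ is a filter to collapse multiple $F$-generators into one) and each $b_i \prec a$. This meet is nonzero iff $c \not\leq b_1 \jo \cdots \jo b_n$; but axiom (A2) gives each $b_i \leq a$, so $b_1 \jo \cdots \jo b_n \leq a$, and since $a \notin F$ and $F$ is upward closed, $c \not\leq a$, hence $c \not\leq b_1 \jo \cdots \jo b_n$.

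Next I would verify concordance. For an element $c \in F$, use concordance of $F$ to produce a witness $c' \prec c$ lying in $F \sset G$. For a generator $\neg b$ with $b \prec a$, axiom (A6) yields $b'$ with $b \prec b' \prec a$, and then (A5) gives $\neg b' \prec \neg b$, where $\neg b'$ is itself a generator of $G$. For a general element above some $c \me \bigme_i \neg b_i$, I would take the witness $c' \me \bigme_i \neg b_i'$, which sits in $G$; applying (A3) together with iterated (A4) shows that this witness is subordinate to $c \me \bigme_i \neg b_i$, and hence, again via (A3), to any element of $G$ lying above it.

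The main obstacle, as I see it, is settling on the right set of generators. The more natural first guess—trying to extend $F$ by $\neg a$ itself—runs into trouble because (A7) only furnishes \emph{some} nonzero element $\prec \neg a$, with no control over its compatibility with $F$, so concordance can fail at $\neg a$. Adding all the negations $\neg b$ with $b \prec a$ sidesteps this: properness is enforced uniformly by (A2) together with $a \notin F$, concordance of the new generators follows directly from (A5) and (A6), and the blocking property then falls out essentially by definition.
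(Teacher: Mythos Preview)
Your proof is correct and is essentially the paper's argument in different clothing: by (A5), your generating set $\{\neg b \mid b \prec a\}$ coincides with $\{d \mid \neg a \prec d\}$, which is exactly the concordant filter the paper adjoins to $F$ via Lemma~\ref{filt}. The paper simply offloads the properness and concordance checks to that earlier lemma, whereas you carry them out by hand; the blocking step is identical in both.
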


\begin{proof}
Suppose $a \notin F$, and consider the set $G = \{c \me d \mid c \in F, \neg a \prec d\}$. I claim that $G$ is a concordant filter. If $c \me d = 0$ for some $c \in F$ and $d$ such that $\neg a \prec d$, then $c \leq \neg d \prec \neg \neg a = a$, which contradicts the assumption that $a \notin F$. Thus by Lemma \ref{filt} $G$ is a concordant filter. 

Now suppose $H$ is a concordant filter such that $H \rset G$. If $a \in H$, then there is $d \in H$ such that $d \prec a$. But this implies that $\neg a \prec \neg d$, so $\neg d \in G \sset H$, a contradiction.
\end{proof}

Given a de Vries algebra $V$ with dual space $(S_V,\sigma)$, we now show that the map $a \mapsto \widehat{a}$ is a Boolean embedding of $V$ into $\RO(S_V)$:
\begin{lemma}\label{basic}
Let $V = (B, \prec)$ be a de Vries algebra with dual filter space $(S_V, \sigma)$. Then for any $a, b \in V$:
\begin{enumerate}
    \item $\widehat{a} \cap \widehat{b} = \widehat{a \me b}$;
    \item The set $\{\widehat{a}\mid a \in V\}$ is a basis for $\sigma$, and the specialization order on $(S_V,\sigma)$ coincides with the inclusion order;
    \item $\widehat{a} \sset \widehat{b}$ iff  $a \leq b$;
    \item $\widehat{a}^\bot = \widehat{\neg a}$;
    \item $\orint \orclo \widehat{a} = \widehat{a} = \widehat{a}\reg$.
\end{enumerate}
\end{lemma}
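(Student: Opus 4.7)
My plan is to handle the five items in order, dispatching (i) and (ii) by routine filter-theoretic reasoning and then using the two preceding lemmas for the substantive parts (iii)--(v). For (i), $F \in \widehat{a} \cap \widehat{b}$ iff $a, b \in F$, and by closure of filters under meets and upward-closure this is equivalent to $a \me b \in F$. Part (ii) then follows immediately: $\widehat{1} = S_V$ together with (i) shows that $\{\widehat{a} \mid a \in V\}$ is closed under finite intersection and hence forms a basis for $\sigma$; and since every basic open is some $\widehat{a}$, the specialization preorder $F \leq G$ unfolds to $a \in F \Rightarrow a \in G$, i.e., $F \sset G$.

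For (iii), only the $(\Rightarrow)$ direction requires work. I would argue contrapositively: if $a \not\leq b$, then $a \me \neg b \neq 0$, so (A7) supplies some $d \neq 0$ with $d \prec a \me \neg b$, whence (A3) yields $d \prec a$ and $d \prec \neg b$. Lemma \ref{filt}(i) then produces the concordant filter $F = \{c \in V \mid d \prec c\}$, which contains both $a$ and $\neg b$; $F$ is proper (since $0 \in F$ would force $d = 0$ via (A2)), so $b \notin F$ and $F \in \widehat{a} \setminus \widehat{b}$, contradicting $\widehat{a} \sset \widehat{b}$.

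For (iv), I would use the basis from (ii) to compute the closure directly: $F \in \widehat{a}^\bot = -\overline{\widehat{a}}$ iff some basic neighborhood $\widehat{c}$ of $F$ is disjoint from $\widehat{a}$, i.e., $c \in F$ and $\widehat{c} \cap \widehat{a} = \widehat{c \me a} = \emptyset$. The auxiliary observation is that $\widehat{x} = \emptyset$ iff $x = 0$: properness of filters handles one direction, and for nonzero $x$, axiom (A7) combined with Lemma \ref{filt}(i) produces a concordant filter witnessing $x$. The condition then becomes $c \me a = 0$ for some $c \in F$, equivalently $c \leq \neg a$ for some $c \in F$, equivalently $\neg a \in F$ by upward-closure.

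For (v), the first equality uses that $\widehat{a}$ is upward-closed in the specialization order (hence open in the up-set topology), so $\widehat{a} \sset \orint \orclo \widehat{a}$ is automatic. For the reverse, unfolding definitions shows $F \in \orint\orclo\widehat{a}$ iff every $G \rset F$ admits some $H \rset G$ with $a \in H$; Lemma \ref{reg} is precisely the contrapositive statement that this fails whenever $a \notin F$. The second equality follows by applying (iv) twice: $\widehat{a}\reg = (\widehat{a}^\bot)^\bot = \widehat{\neg a}^\bot = \widehat{\neg \neg a} = \widehat{a}$. The only genuinely non-routine steps are building the separating concordant filter in (iii) via (A7) and Lemma \ref{filt}(i), and recognizing that Lemma \ref{reg} is exactly the regularity-like principle needed to evict $F$ from $\orint \orclo \widehat{a}$ in (v); the rest is bookkeeping with filters and Boolean operations.
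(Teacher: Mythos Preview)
Your proof is correct and follows essentially the same approach as the paper: parts (i)--(ii) are routine, part (iii) reduces to the fact that $\widehat{x} \neq \emptyset$ for $x \neq 0$ via (A7) and Lemma~\ref{filt}(i), part (iv) computes the closure using the basis and this same fact, and part (v) invokes Lemma~\ref{reg} for the order-regular half and applies (iv) twice for the topological half. The only cosmetic differences are that you argue (iii) directly with $a \me \neg b$ rather than first isolating the nonemptiness statement, and you compute $\widehat{a}^\bot$ rather than $\overline{\widehat{a}}$ in (iv); both are immaterial.
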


\begin{proof}
Part (i) is a consequence of the fact that the elements of $S_V$ are filters, and part (ii) immediately follows from part (i). For part (iii), the right-to-left direction is immediate, and for the converse, since $B$ is a Boolean algebra it is enough to show that for any $a \neq 0$, there is some concordant filter $F$ such that $a \in F$. To see this, note that, by (A7), if $a \neq 0$ there is $b \neq 0$ such that $b \prec a$. Then $F = \{c \in V \mid b \prec c\}$ is a concordant filter by Lemma \ref{filt}, and $a \in F$.

For part (iv), since the set $\{\widehat{a}\mid a \in V\}$ is a basis for $\sigma$ by (ii), we have that for any $F \in S_V$, $F \in \overline{\widehat{a}}$ iff for any basic open $\widehat{b}$, $F \in \widehat{b}$ implies $\widehat{a} \cap \widehat{b} \neq \emptyset$. By (i) and (iii), this means that $F \in \overline{\widehat{a}}$ iff $b \me a \neq 0$ for all $b \in F$ iff $\neg a \notin F$ iff $F \notin \widehat{\neg a}$. Hence $\widehat{a}^\bot = \widehat{\neg a}$. 

Finally, for part (v), $\widehat{a} = \widehat{a}\reg$ follows directly from (iv). To show that $\orint \orclo \widehat{a} = \widehat{a}$, note that the right-to-left inclusion is immediate since $\widehat{a}$ is upward-closed. Since the specialization order on $(S_V,\sigma)$ coincides with the inclusion ordering, establishing the converse amounts to showing that for any $F \in S_V$, if $a \notin F$, then there is $G \rset F$ such that for all $H \rset G$, $a \notin H$. But this is precisely Lemma \ref{reg}.
\end{proof}

\begin{corollary} \label{cor1}
Let $V = (B, \prec)$ be a de Vries algebra with dual filter space $(S_V, \sigma)$. Then $B$ is isomorphic to $\RO(S_V)$.
\end{corollary}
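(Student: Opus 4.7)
The plan is to show that the map $\phi : B \to \RO(S_V)$ defined by $\phi(a) = \widehat{a}$ is a Boolean algebra isomorphism. Most of the structural work is already done by Lemma~\ref{basic}: part (v) ensures that $\phi$ lands in $\RO(S_V)$; part (i) gives preservation of finite meets; part (iv), together with the fact that Boolean complementation in $\RO(S_V)$ is the operation $(-)^\bot$, gives preservation of complements; and part (iii) gives both order-preservation and order-reflection, so $\phi$ is injective. Preservation of joins then follows from that of meets and complements, so $\phi$ is an injective Boolean homomorphism.

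The real task is surjectivity. Given $U \in \RO(S_V)$, I would use the completeness of $B$ to define
\[ a_U = \bigjo \{b \in B \mid \widehat{b} \sset U\}, \]
and aim to prove $\widehat{a_U} = U$. The inclusion $U \sset \widehat{a_U}$ is immediate from the basis property (Lemma~\ref{basic}(ii)): any $F \in U$ lies in some basic open $\widehat{b} \sset U$, and then $b \in F$ together with $b \leq a_U$ forces $F \in \widehat{a_U}$.

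The main obstacle is the reverse inclusion $\widehat{a_U} \sset U$, and my plan is to reduce it to an algebraic triviality by exploiting the regularity of $U$. Since $U = U\reg$, it suffices to show $\widehat{a_U} \cap U^\bot = \emptyset$, because then the open set $\widehat{a_U}$ will sit inside $\overline{U}$, forcing $\widehat{a_U} \sset U$ by regularity. Expanding $U^\bot$ as a union of basic opens, each $\widehat{d} \sset U^\bot$ is disjoint from every $\widehat{b}$ with $\widehat{b} \sset U$; translating this through Lemma~\ref{basic}(i) and (iii) yields $d \me b = 0$ for every such $b$, hence $d \leq \neg a_U$. Consequently $\widehat{a_U} \cap \widehat{d} \sset \widehat{a_U \me \neg a_U} = \emptyset$, and taking unions over all such $d$ gives the required disjointness.

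The subtle point I anticipate is the passage from the topological condition $\widehat{d \me b} = \emptyset$ to the algebraic condition $d \me b = 0$: this is precisely the content of Lemma~\ref{basic}(iii), and it ultimately rests on axiom (A7) together with Lemma~\ref{filt}, which guarantee that every nonzero element of $B$ is contained in some concordant filter. The completeness of $B$ is also used essentially, in order for $a_U$ to exist in the first place.
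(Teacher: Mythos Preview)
Your proposal is correct and follows essentially the same strategy as the paper: both arguments use completeness of $B$ to define the candidate preimage of a regular open $U$ as the join of all $b$ with $\widehat{b}\sset U$, and then exploit regularity of $U$ to close the gap. The only cosmetic difference is that the paper proves $\overline{U}=\overline{\widehat{\bigjo A}}$ in one stroke by computing that $F\in\overline{U}$ iff $\neg\bigjo A\notin F$, whereas you split the argument into $U\sset\widehat{a_U}$ and $\widehat{a_U}\cap U^\bot=\emptyset$; the underlying content is the same.
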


\begin{proof}
Lemma \ref{basic} implies that the map $a \mapsto \widehat{a}$ is an injective Boolean homomorphism of $B$ into $\RO(S_V)$. Therefore it only remains to show that every regular open subset of $S_V$ is of the form $\widehat{a}$ for some $a \in V$. Let $U = \bigcup_{a \in A} \widehat{a}$ be a regular open set. Recall that $\bigjo A \in B$ since $B$ is a complete Boolean algebra. I claim that $\overline{U} = \overline{\widehat{\bigjo A}}$. Since $U$ is regular open, this will readily imply that $U = \widehat{\bigjo A}$. For the proof of the claim, recall that for any $F \in S_V$, $F \in \overline{\widehat{\bigjo A}}$ iff $\neg \bigjo A \notin F$. Similarly, $F \in \overline{U}$ iff for any $b \in F$ there is $a \in A$ such that $b \nleq \neg a$. But the latter condition is equivalent to $b \nleq \bigme \{ \neg a \mid a \in A\}$, which is in turn equivalent to $\neg \bigjo A \notin F$. Hence $F \in \overline{U}$ iff $F \in \overline{\widehat{\bigjo A}}$ for any $F \in S_V$, which means that $\overline{U} = \overline{\widehat{\bigjo A}}$. This completes the proof that $B$ is isomorphic to $\RO(S_V)$.
\end{proof}

We now turn to representing the subordination relation on a de Vries algebra. For any topological space $(X,\tau)$ and any $U,V \sset X$, let $U \ll V$ iff $\overline{U} \sset \orclo {V}$.

\begin{lemma} \label{prox}
Let $V = (B, \prec)$ be a de Vries algebra with dual filter space $(S_V, \sigma)$. For any $a, b \in V$, $a \prec b$ iff $\widehat{a} \ll \widehat{b}$.
\end{lemma}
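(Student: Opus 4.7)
The plan is to rewrite both sides concretely. By Lemma \ref{basic}(iv), $\overline{\widehat{a}} = \{F \in S_V \mid \neg a \notin F\}$. Because the specialization order on $(S_V,\sigma)$ is inclusion by Lemma \ref{basic}(ii), the up-set closure $\orclo{\widehat{b}}$ is just the downward closure of $\widehat{b}$, i.e., the set of concordant filters $F$ that can be extended to a concordant filter containing $b$. So the statement $\widehat{a} \ll \widehat{b}$ becomes: every concordant filter $F$ with $\neg a \notin F$ has a concordant extension containing $b$.

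For the forward direction I will assume $a \prec b$, dispose of the trivial case $a = 0$, and then given such an $F$ form the concordant filter $G = \{d \mid a \prec d\}$ from Lemma \ref{filt}(i), which contains $b$. I then plan to merge $F$ and $G$ using Lemma \ref{filt}(ii); the one nontrivial check is that $c \me d \neq 0$ whenever $c \in F$ and $a \prec d$. This reduces via (A5) and (A3) to $c \leq \neg a$, contradicting $\neg a \notin F$. The merged concordant filter then witnesses $F \in \orclo{\widehat{b}}$.

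For the converse I expect the real work to lie. I will argue the contrapositive: assume $a \not\prec b$ (so in particular $b \neq 1$, since $a \prec 1$ always holds) and build a definable concordant filter $F$ witnessing the failure. The candidate is the upward closure of $A = \{\neg b' \mid b' \prec b\}$. The plan is then to verify that $A$ is closed under finite meets using (A4) together with (A5); that the resulting filter is proper, using $b \neq 1$; that it is concordant, via (A6) and (A5); that $\neg a \notin F$, by (A3) together with $a \not\prec b$; and finally that no concordant extension of $F$ can contain $b$, since any such extension would contain some $b' \prec b$ whose complement $\neg b'$ already lies in $F$, forcing $0$ into the extension.

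The main obstacle is this converse: classically one would invoke (BPI) to pick an end separating $a$ from $b$, but choice-freely the filter $F$ must be built canonically from the round structure beneath $b$ and designed so that its very construction blocks any extension witnessing $b$. The axioms (A3)--(A6) are precisely what allows this ``minimal obstructing filter'' to be simultaneously proper, concordant, and disjoint from $\{\neg a\}$.
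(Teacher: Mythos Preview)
Your proposal is correct, and the forward direction is exactly the paper's argument, organized the same way via Lemma~\ref{filt}.

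For the converse, your filter is actually a proper subfilter of the one the paper uses. The paper takes $F = \{c \me d \mid a \prec c,\ \neg b \prec d\}$, i.e., it merges your filter $\{\neg b' \mid b' \prec b\} = \{d \mid \neg b \prec d\}$ with the additional concordant filter $\{c \mid a \prec c\}$ via Lemma~\ref{filt}(ii), and then argues $\neg a \notin F$ by concordance: if $\neg a \in F$ then some $e \in F$ has $e \prec \neg a$, whence $a \prec \neg e$ puts $\neg e$ into $F$. Your choice avoids this extra layer: with the smaller filter, $\neg a \in F$ directly forces $a \leq b' \prec b$ for some $b'$, contradicting $a \nprec b$ by (A3). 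The verification that no concordant extension contains $b$ is identical in both proofs. So your route is a genuine (if modest) simplification---one concordant filter instead of a merge of two---at no cost in generality. Note also that your $F$ is literally the filter of Lemma~\ref{filt}(i) applied to $\neg b$ (which is nonzero since $b \neq 1$), so you could cite that lemma directly rather than re-verifying closure under meets and concordance by hand.
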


\begin{proof}
For the first direction, suppose that $a \prec b$. Then if $F$ is a concordant filter such that $\neg a \notin F$, by Lemma \ref{filt} $G = \{c \me d \mid c \in F, a \prec d\}$ is a concordant filter extending $F$ and containing $b$. Now for any concordant filter $F$, $F \in \overline{\widehat{a}}$ iff $\neg a \notin F$. This shows that $\overline{\widehat{a}} \sset \orclo \widehat{b}$. 
Conversely, assume that $a \nprec b$. I claim that there is a concordant filter $F$ such that $\neg a \notin F$ and $b \notin G$ for any concordant filter $G \rset F$. Let $F = \{c \me d \mid a \prec c, \neg b \prec d\}$. By Lemma \ref{filt}, $F$ is a concordant filter if $c \me d \neq 0$ for any $a \prec c$, $\neg b \prec d$. But if $c\me d = 0$, then $ a \prec c \leq \neg d \prec \neg \neg b = b$, so $a \prec b$ by (A3), contradicting our assumption. Hence $F$ is a concordant filter. Now if $\neg a \in F$ there must be some $e \in F$ such that $e \prec \neg a$. But this means that $a \prec \neg e$ and therefore $\neg e \in F$, a contradiction. Similarly for any concordant $G \rset F$, if $b \in G$ there must be some $e \in G$ such that $e \prec b$. But then $\neg b \prec \neg e$ so $\neg e \in F \sset G$, a contradiction. Therefore $F \in \overline{\widehat{a}} \setminus \orclo \widehat{b}$.
\end{proof}

Putting Corollary \ref{cor1} and Lemma \ref{prox} together yields the desired representation theorem.

\begin{theorem} \label{obj1}
Let $V = (B, \prec)$ be a de Vries algebra with dual filter space $(S_V, \sigma)$. Then $V$ is isomorphic to $(\RO(S_V), \ll)$.
\end{theorem}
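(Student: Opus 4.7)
The plan is to simply assemble the two results just established. By Corollary~\ref{cor1}, the map $\varphi : B \to \RO(S_V)$ defined by $\varphi(a) = \widehat{a}$ is a Boolean algebra isomorphism; by Lemma~\ref{prox}, we have $a \prec b$ iff $\widehat{a} \ll \widehat{b}$, so $\varphi$ both preserves and reflects the subordination relation. Since an isomorphism of de Vries algebras is by definition a Boolean algebra isomorphism that restricts to an order isomorphism between the two subordination relations, $\varphi$ is the desired isomorphism.

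Before declaring victory, I should verify that $(\RO(S_V), \ll)$ really is a de Vries algebra, so that the word ``isomorphism'' is even meaningful. But this is automatic: $\RO(S_V)$ is a complete Boolean algebra, and the compingent axioms (A1)--(A7) transfer along $\varphi$ from $(B,\prec)$ to $(\RO(S_V),\ll)$ because $\varphi$ is a bijection preserving and reflecting all the relevant structure (Boolean operations and the binary relation). Thus no separate verification of the axioms on the topological side is needed.

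The main conceptual content has already been absorbed into the two preceding lemmas; no new obstacle arises at this stage. The only mild subtlety worth flagging is that $\ll$ on $\RO(S_V)$ is defined using the order-closure $\orclo$ on the second coordinate, which is appropriate precisely because $S_V$ need not be Hausdorff or $T_1$, so the naive compact-Hausdorff definition $\overline{U}\sset V$ would be too restrictive; it is Lemma~\ref{prox} that secures the correct matching with $\prec$ in this semi-pointfree setting.
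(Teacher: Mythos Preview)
Your proposal is correct and matches the paper's approach exactly: the paper simply states that ``Putting Corollary~\ref{cor1} and Lemma~\ref{prox} together yields the desired representation theorem,'' and your write-up spells this out, including the observation that the de Vries axioms on $(\RO(S_V),\ll)$ transfer along the bijection.
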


\section{De Vries Spaces}\label{space}
In this section, we characterize the choice-free duals of de Vries algebras. In other words, we give an axiomatization of topological spaces of the form $(S_V,\sigma)$ for some de Vries algebra $V$. In order to do so, we first need to introduce the following separation axioms:

\begin{definition}

\begin{enumerate}
\item[]
    \item  A topological space $(X, \tau)$ is \textit{order-regular} if for any closed set $B$ and any $x \notin \orint B$, there are disjoint open sets $U$, $V$ such that $x \in U$ and $\orint B \sset V$.
    \item  A topological space $(X, \tau)$ is \textit{order-normal} if for any closed set $A$ and any regular closed set $B$ such that $A$ is disjoint from $\orint B$, there are disjoint open sets $U$ and $V$ such that $A \sset \orclo U$ and $\orint B \sset V$.
\end{enumerate}
\end{definition}

Order-regularity and order-normality are straightforward variations of the usual regularity and normality separation axioms in general topology. Separation axioms for ordered topological spaces have been studied in the past \cite{mccartan1968separation,nachbin1965,Priestley}, but here we are concerned with a very specific kind of ordered topological spaces, in which the order is determined by the topology. In the case of compact $T_1$ spaces, these separation properties are essentially equivalent to Hausdorffness:

\begin{lemma}\label{sep}
Let $(X, \tau)$ be a compact $T_1$-space. The following are equivalent:
\begin{enumerate}
    \item $(X,\tau)$ is Hausdorff;
    \item $(X,\tau)$ is order-regular;
    \item $(X,\tau)$ is order-normal and order-regular.
\end{enumerate}
\end{lemma}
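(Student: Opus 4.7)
The key observation driving the argument is that in a $T_1$ space the specialization preorder collapses to equality: since every singleton $\{y\}$ is closed, $x \in \overline{\{y\}}$ forces $x = y$, so $x \leq y$ iff $x = y$. Consequently every subset of $X$ is upward closed, the up-set topology is discrete, and $\orint B = B$, $\orclo B = B$ for every $B \sset X$. Under this collapse, order-regularity reduces to the classical regularity axiom ($T_3$): for any closed $B$ and any $x \notin B$, separate them by disjoint open sets. Likewise, order-normality reduces to the requirement that any closed $A$ and regular closed $B$ with $A \cap B = \emptyset$ be separable by disjoint open sets, which is a weakening of classical normality.

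For (1) $\Rightarrow$ (3), I would invoke the standard theorem that every compact Hausdorff space is normal (hence regular). Regularity yields order-regularity directly, and normality yields order-normality as a special case, since the defining condition of order-normality only demands separation for the restricted pairs (closed, regular closed) satisfying the disjointness hypothesis.

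The implication (3) $\Rightarrow$ (2) is immediate by conjunction elimination. For (2) $\Rightarrow$ (1), given distinct points $x, y \in X$, the $T_1$ hypothesis makes $\{y\}$ closed with $\orint\{y\} = \{y\}$, and $x \notin \{y\}$. Order-regularity then supplies disjoint open sets $U, V$ with $x \in U$ and $y \in V$, witnessing the Hausdorff property.

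The whole argument is essentially a careful unpacking of the definitions once the $T_1$ hypothesis has trivialized the specialization order. Compactness plays no role in the direction (2) $\Rightarrow$ (1), and enters only through the classical compact-Hausdorff-implies-normal theorem used in (1) $\Rightarrow$ (3). The only subtlety to check is that the specialization preorder genuinely collapses under $T_1$; after that, no real obstacle remains.
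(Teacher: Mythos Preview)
Your proposal is correct and follows essentially the same approach as the paper: both arguments hinge on the observation that $T_1$ collapses the specialization preorder to equality, reducing order-regularity to classical regularity and order-normality to a weak form of normality, after which the standard fact that compact Hausdorff spaces are normal (hence regular) does all the work. Your presentation is slightly more detailed, but the strategy is identical.
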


\begin{proof}
Recall that if $(X,\tau)$ is $T_1$, then the specialization preorder on $X$ is just the identity relation. Thus a $T_1$ order-regular space is regular Hausdorff, which implies that it is also Hausdorff. As compact Hausdorff spaces are also regular, this shows that (i) and (ii) are equivalent. Moreover, (iii) clearly implies (ii), and compact Hausdorff spaces are also normal, which for $T_1$ spaces implies order-normality, showing that (i) implies (iii).
\end{proof}

As we will now see, for spaces in which the regular opens are also order-regular open, order-normality suffices to establish that they form a de Vries algebras when equipped with the relation $\ll$ defined above.

\begin{lemma}
Let $(X, \tau)$ be an order-normal space such that $\RO(X) \sset \ROU(X)$. For any $U,V \in \RO(X)$, let $U \ll V$ iff $\overline{U} \sset \orclo V$. Then $(\RO(X),\ll)$ is a de Vries algebra.
\end{lemma}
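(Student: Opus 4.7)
The plan is to verify the seven compingent axioms (A1)--(A7) for $\ll$, using that $\RO(X)$ is always a complete Boolean algebra. Axioms (A1) and (A3) are immediate: $\overline{X}=X=\orclo X$ gives $X \ll X$, and monotonicity of $\overline{\cdot}$ and $\orclo$ handles (A3). For (A2), if $U \ll V$ then $U \sset \overline{U} \sset \orclo V$; since $U$ is $\tau$-open and hence upward-closed in the specialization order, and since $V = \orint \orclo V$ is the largest upward-closed subset of $\orclo V$ by $\RO(X) \sset \ROU(X)$, this forces $U \sset V$.

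The crucial step will be (A5). Given $U \ll V$, I want to show $V^\bot \ll U^\bot$. Since $V$ is regular open, $\overline{V^\bot} = X \setminus V$ in $\RO(X)$, so it suffices to prove that every $x \notin V$ admits some $y \geq x$ with $y \notin \overline{U}$. Arguing by contradiction, if $\uparrow x \sset \overline{U}$, then $\uparrow x \sset \overline{U} \sset \orclo V$; but $\uparrow x$ is upward-closed and $V = \orint \orclo V$ is the largest upward-closed subset of $\orclo V$, so $\uparrow x \sset V$, forcing $x \in V$, a contradiction.

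For (A4) my plan is to bypass a direct attack and argue by duality. The naive approach would require $\orclo V \cap \orclo W \sset \orclo(V \me W)$, which already fails at the level of sets (simple counterexamples arise whenever $V$ and $W$ have incompatible upper bounds above a common lower point), and this is the main obstacle I expect if one insists on proving (A4) head-on. The duality route, by contrast, is clean: since unions of regular closed sets are regular closed, $\overline{Y \jo Z} = \overline{Y} \cup \overline{Z}$ in $\RO(X)$, yielding the trivial ``co-(A4)'': $Y \jo Z \ll X$ iff $Y \ll X$ and $Z \ll X$. Given $U \ll V$ and $U \ll W$, two applications of (A5) turn these into $V^\bot \ll U^\bot$ and $W^\bot \ll U^\bot$; co-(A4) then gives $V^\bot \jo W^\bot \ll U^\bot$; and one further application of (A5), combined with De Morgan in $\RO(X)$, produces $U \ll (V^\bot \jo W^\bot)^\bot = V \me W$.

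The remaining axioms (A6) and (A7) are where order-normality enters. For (A6), given $U \ll V$, I would apply order-normality to the closed set $A = \overline{U}$ and the regular closed set $B = X \setminus V$: since $\orint B = X \setminus \orclo V$ is disjoint from $A$, order-normality supplies disjoint opens $O, O'$ with $\overline{U} \sset \orclo O$ and $X \setminus \orclo V \sset O'$; then $\overline{O} \sset X \setminus O' \sset \orclo V$, and setting $W := O\reg \in \RO(X)$ yields $U \ll W \ll V$. For (A7), given a nonempty $U \in \RO(X)$, I pick $x \in U$ and take $A = \overline{\{x\}}$ (which is the down-set of $x$, hence closed) and $B = X \setminus U$: since $x$ itself is an upper bound in $U$ for every $y \leq x$, $A \sset \orclo U$ and $A \cap \orint B = \emptyset$. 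Running the same construction then produces a nonempty open $O$ (nonempty because $x \in A \sset \orclo O$) with $\overline{O} \sset \orclo U$, whose regularization $W := O\reg$ is a nonempty regular open with $W \ll U$.
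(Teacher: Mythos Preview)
Your argument is correct. The treatments of (A1)--(A3), (A5)--(A7) are essentially the paper's proofs rephrased pointwise rather than via set-algebraic complementation, and the use of order-normality in (A6) and (A7) matches the paper's exactly.

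The one genuine methodological difference is (A4). The paper proves it \emph{directly}: from $\overline{U}\sset\orclo V_i$ it passes to $\orint\overline{U}\sset\orint\orclo V_i=V_i$, rewrites $\orint\overline{U}$ as $-\orclo(U^\bot)$, intersects, takes $\orclo$, and then uses $U^\bot\in\ROU(X)$ to recover $\overline{U}=\orclo(-\orclo U^\bot)\sset\orclo(V_1\cap V_2)$. So the ``naive'' obstacle you flag---that $\orclo V_1\cap\orclo V_2\not\sset\orclo(V_1\cap V_2)$ in general---is real, but there \emph{is} a direct route that bypasses it by first pushing down to $V_1\cap V_2$ via $\orint$ and only then taking $\orclo$. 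Your route instead leverages the already-proved (A5) together with the easy dual fact $\overline{Y\jo Z}=\overline{Y}\cup\overline{Z}$ (valid since $\overline{O\reg}=\overline{O}$ for open $O$), deriving (A4) as $(A5)\circ\text{co-}(A4)\circ(A5)$. This is a clean, conceptually pleasing alternative: it isolates (A5) as the place where the hypothesis $\RO(X)\sset\ROU(X)$ does real work, and gets (A4) for free from Boolean duality. The paper's approach, by contrast, shows (A4) and (A5) are each independently forced by the $\ROU$ hypothesis.

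One cosmetic point: in your statement of co-(A4) you write ``$Y\jo Z\ll X$ iff $Y\ll X$ and $Z\ll X$'', reusing $X$ for the right-hand argument when $X$ already denotes the ambient space; pick a fresh letter.
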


\begin{proof}
Since $\RO(X)$ is a complete Boolean algebra, we only need to verify axioms (A1)-(A7):
\begin{description}
    \item[(A1)] $\bm{X \ll X}$. Immediate.
    \item[(A2)] $\bm{U \ll V}$ \textbf{implies} $\bm{U \sset V}$. Suppose $\overline{U} \sset \orclo V$. Taking complements, this yields $\mathord{-}\orclo V \sset U^\bot$. Because every closed set is a downset, $\orclo A \sset \overline{A}$ for any $A \sset X$, so $\orclo \mathord{-}\orclo V \sset \overline{U^\bot}$. Complementing again, we conclude that $U = U\reg \sset \orint \orclo V = V$.
    \item[(A3)] $\bm{U_1 \sset U_2 \ll V_1 \sset V_2}$ \textbf{implies} $\bm{U_1 \ll V_2}$. We have the following chain on inclusions: $\overline{U_1} \sset \overline{U_2} \sset \orclo V_1 \sset \orclo V_2$.
    \item[(A4)] $\bm{U \ll V_1}$ \textbf{and} $\bm{U\ll V_2}$ \textbf{together imply} $\bm{U\ll V_1 \cap V_2}$. Suppose both $\overline{U} \sset \orclo V_1$ and $\overline{U} \sset \orclo V_2$. Then since $U, V_1, V_2 \in \ROU(X)$, we have that $\mathord{-}\orclo (U^\bot) \sset V_1$ and $\mathord{-}\orclo (U^\bot) \sset V_2$, hence $\orclo \mathord{-}\orclo (U^\bot) \sset \orclo (V_1 \cap V_2)$. Now since $U^\bot \in \ROU(X)$, we have that $\orclo \mathord{-}\orclo (U^\bot) = \mathord{-}(U^\bot) = \overline{U}$, and therefore $\overline{U} \sset \orclo (V_1 \cap V_2)$.
    \item[(A5)] $\bm{U \ll V}$ \textbf{implies} $\bm{V^\bot \ll U^\bot}$. Suppose $\overline{U} \sset \orclo V$. Then $\orclo \mathord{-}\orclo V \sset \orclo (U^\bot)$. Taking complements, we have $-\orclo (U^\bot) \sset \orint\orclo V = V$ since $V \in \ROU(X)$. Now since $V \in \RO(X)$, $-V = \overline{V^\bot}$. Therefore, taking complements again, we have that $\overline{V^\bot} \sset \orclo (U^\bot)$, hence $V^\bot \ll U^\bot$.
    \item[(A6)] $\bm{U \ll V}$ \textbf{implies that there is} $\bm{W}$ \textbf{such that} $\bm{U \ll W \ll V}$. Suppose $\overline{U} \sset \orclo V$, and consider the set $X \setminus \orclo V = \orint \mathord{-}V$. As $\overline{U}$ and $\orint \mathord{-}V$ are disjoint and $-V$ is regular closed, by order-normality we get some disjoint open sets $W_1, W_2$ such that $\overline{U} \sset \orclo W_1$ and $\orint \mathord{-}V \sset W_2$. Note that this implies that $\overline{W_1} \cap \orint \mathord{-}V =\emptyset$, and therefore $\overline{W_1} \sset \orclo V$. Letting $W = W_1\reg$, we have that $\overline{U} \sset \orclo W_1 \sset \orclo W$, and $\overline{W} = \overline{W_1} \sset \orclo V$.
    \item[(A7)] \textbf{If} $\bm{U \neq \emptyset}$ \textbf{then there is} $\bm{V \neq \emptyset}$ \textbf{such that} $\bm{V \ll U}$. Suppose $U \neq \emptyset$ and let $x \in U$. Consider $X \setminus \orclo U = \orint \mathord{-}U$. Note that $\orclo x$ is disjoint from $\orint \mathord{-}U$ and is closed, since $\orclo x = \bigcap_{x \notin U, U\in \tau} \mathord{-}U$. By order-normality, we have disjoint open sets $V_1$ and $V_2$ such that $\orclo x \sset \orclo V_1$ and $\orint \mathord{-}U \sset V_2$. Note that this implies that $V_1 \neq \emptyset$ and that $\overline{V_1} \sset \orclo U$. Now letting $V = V_1\reg$, it follows that $V \neq \emptyset$ and $\overline{V} = \overline{V_1} \sset \orclo U$. 
\end{description}
Thus $(\RO(X),\ll)$ is a de Vries algebra.
\end{proof}

We are now in a position to define the choice-free duals of de Vries algebras:

\begin{definition}
A \textit{de Vries space} ($dV$-space for short) is a topological space $(X, \tau)$ satisfying the following conditions:
\begin{enumerate}
    \item $(X,\tau)$ is $T_0$, compact and order-normal;
    \item $\RO(X)$ is a basis for $\tau$ and $\RO(X) \sset \ROU(X)$;
    \item For every $x \in \RO(X)$, $\RO(x) = \{U \in \RO(X) \mid x \in U\}$ is a concordant filter on $\RO(X)$, and for every filter $F$ on $\RO(X)$, there is $x \in X$ such that $\concord F = \RO(x)$.
\end{enumerate}
\end{definition}

\begin{lemma} \label{lma3}
Let $V = (B, \prec)$ be a de Vries algebra. Then $(S_V, \sigma)$ is an order-regular $dV$-space.
\end{lemma}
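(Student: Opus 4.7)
The plan is to verify the three defining conditions of a $dV$-space in turn and then derive order-regularity. The $T_0$ axiom follows from Lemma \ref{basic}: distinct concordant filters $F, G$ disagree on some $a$, and $\widehat{a}$ separates them. For compactness, observe that $\{1\}$ is a concordant filter (by (A1)) that belongs to a basic open $\widehat{a}$ only when $a = 1$; hence every basic open cover of $S_V$ must already include $\widehat{1} = S_V$. Condition (2) is immediate from Lemma \ref{basic}(ii), (v) and Corollary \ref{cor1}, since $\RO(S_V) = \{\widehat{a} \mid a \in V\}$ is then a basis contained in $\ROU(S_V)$. For condition (3), the isomorphism $V \cong \RO(S_V)$ transports filters: for $F \in S_V$, $\RO(F) = \{\widehat{a} \mid a \in F\}$ is the image of $F$ and hence a concordant filter; and for any filter $\mathcal{F}$ on $\RO(S_V)$, the pullback $F^\star = \{a \mid \widehat{a} \in \mathcal{F}\}$ has concordization $F := \concord F^\star \in S_V$ satisfying $\RO(F) = \concord \mathcal{F}$, via Lemma \ref{prox}.

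Order-normality is where the main work lies. Given closed $A$ and regular closed $B = -\widehat{a}$ with $A \cap \orint B = \emptyset$, I first claim that every $F \in A$ contains some $c$ with $c \prec a$. Since $F \notin \orint B$, there is a concordant $G \rset F$ with $a \in G$; picking $b \in G$ with $b \prec a$ by concordance and interpolating $b \prec c \prec a$ via (A6), the containment $F \sset G \in \widehat{b}$ gives $F \in \overline{\widehat{b}} \sset \orclo \widehat{c} = \widehat{c}$ by Lemma \ref{prox} and the fact that $\widehat{c}$, being open, is upward-closed in the specialization order. Hence $A \sset \bigcup_{c \prec a} \widehat{c}$, and compactness of $A$ (closed in compact $S_V$) yields a finite subcover $\widehat{c_1}, \ldots, \widehat{c_n}$. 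Setting $c = c_1 \jo \cdots \jo c_n$, successive applications of (A5), (A4), (A5) give $c \prec a$, and a further interpolation $c \prec d \prec a$ suggests the candidate separators $U = \bigcup_i \widehat{c_i}$ and $V = \widehat{\neg d}$: these are disjoint because each $c_i \leq c \leq d$ forces $c_i \me \neg d = 0$, and $A \sset U = \orclo U$ by construction.

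The main obstacle is verifying $\orint B \sset V$, which reduces to the following key lemma: $\orint(-\widehat{a}) \sset \widehat{\neg d}$ whenever $d \prec a$. Suppose $H \in \orint(-\widehat{a})$ but $\neg d \notin H$. Then the filter $H^+$ generated by $H \cup \{d\}$ is proper (since no $h \in H$ lies below $\neg d$), and its concordization $\concord H^+$ is a concordant filter extending $H$. Because $d \in H^+$ satisfies $d \prec a$, the definition of $\concord$ directly yields $a \in \concord H^+$, contradicting the hypothesis that every concordant extension of $H$ avoids $a$.

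Finally, order-regularity follows from the same analysis applied to a single point, bypassing the need for compactness: given $F \notin \orint B$ with $B$ closed, one finds a concordant $G \rset F$ in some basic open $\widehat{a} \sset -B$, obtains $c \in F$ with $c \prec a$ as above, interpolates $c \prec d \prec a$, and takes $U = \widehat{c}$ and $V = \widehat{\neg d}$ as the required disjoint opens.
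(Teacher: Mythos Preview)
Your order-normality argument has a genuine gap. The assertion $\orclo \widehat{c} = \widehat{c}$ is false: open sets are \emph{upward}-closed in the specialization order, hence \emph{open} in the up-set topology, whereas $\orclo$ is the \emph{closure} in that topology, i.e., the downward closure. So from $F \in \overline{\widehat{b}} \sset \orclo \widehat{c}$ you only get that some concordant extension of $F$ contains $c$, not that $c \in F$. And indeed your claim ``every $F \in A$ contains some $c$ with $c \prec a$'' is simply false: take $A = \{\{1\}\}$, which is closed (it equals $\bigcap_{d \neq 1} -\widehat{d}$), and any $a$ with $0 < a < 1$. Then $\{1\} \in \orclo \widehat{a}$ (any $G = \{e \mid b \prec e\}$ with $0 \neq b \prec a$ extends $\{1\}$ and contains $a$), so $A \cap \orint B = \emptyset$ for $B = -\widehat{a}$; but $\{1\}$ contains no $c \prec a$ since $a \neq 1$. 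Your compactness-and-cover strategy therefore cannot get off the ground, and the same flaw propagates to your order-regularity argument via the ``as above''.

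The paper avoids this entirely by a different idea: rather than covering the closed set $A$, it produces a single \emph{test filter}. Writing $B = -\widehat{b}$, the concordant filter $F_0 = \{c \mid \neg b \prec c\}$ satisfies $F_0 \notin \orclo \widehat{b}$ (any extension containing $b$ would contain both some $e \prec b$ and $\neg e$), hence $F_0 \notin A$, so $F_0 \in \widehat{a}$ for some $a$ indexing the complement of $A$. Concordance of $F_0$ then gives $\neg b \prec c \prec a$ for some $c$, and the single pair $\widehat{c}, \widehat{\neg c}$ separates $\orint B$ from $A$ (with $A \sset -\widehat{a} = \overline{\widehat{\neg a}} \sset \orclo \widehat{\neg c}$ via Lemma~\ref{prox}). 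Your ``key lemma'' $\orint(-\widehat{a}) \sset \widehat{\neg d}$ for $d \prec a$ is correct and is essentially the same step the paper uses for that inclusion; what is missing is a sound route to the interpolant, and the test-filter trick supplies it without any appeal to compactness.
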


\begin{proof}
Condition (ii) follows from Lemma \ref{basic}, and condition (iii) is immediate from Theorem \ref{obj1}, so we only have to check that $(S_V,\sigma)$ is $T_0$, compact, order-normal and order-regular. It is routine to verify that $(S_V, \sigma)$ is $T_0$. For compactness, note that $\concord\{1\} = \{1\} \in S_V$, so if $S_V \sset \bigcup_{a \in A} \widehat{a}$ for some $A \sset V$, it follows that $1 \in A$ and thus $S_V$ has a finite subcover. 

For order-regularity, let $B = \bigcap_{a \in A} -\widehat{a}$ be a closed set and $F \notin \orint B$. Then $F \in \orclo \mathord{-}B = \bigcup_{a \in A} \orclo \widehat{a}$, which means that there is $a \in A$ and $c \prec a$ such that $\neg c \notin F$. By $(A6)$ there is some $c' \in V$ such that $c \prec c' \prec a$. Now $F \in -\widehat{\neg c} = \overline{\widehat{c}} \sset \orclo \widehat{c'}$, and $-\widehat{\neg c'}=\overline{c'} \sset \orclo \widehat{a}$, so $\orint B \sset \widehat{\neg c'}$. Thus $\widehat{c'}$ and $\widehat{\neg c'}$ are the required open sets.

Finally, for order-normality, fix a closed set $U = \bigcap_{a \in A} -\widehat{a}$ and a regular closed set $B$ such that $\bigcap_{a \in A} -\widehat{a} \sset \orclo \mathord{-}B$. Because $B$ is regular closed it is of the form $-\widehat{b}$ for some $b \in V$. Now consider the concordant filter $F = \{c \in V \mid \neg b \prec c\}$. If there is $G \supseteq F$ such that $G \in \widehat{b}$, then there must be $c \in G$ such that $c \prec b$. But then $\neg c \in F \sset G$, and $G$ is not a proper filter, a contradiction. Thus $F \notin \orclo \widehat{b}$, which means that $F \in \bigcup_{a \in A} \widehat{a}$. Hence there is some $a \in A$ and some $c \in V$ such that $\neg b \prec c \prec a$, which in turn implies that $\neg a \prec \neg c \prec b$. This implies that $-\widehat{a} = \overline{\widehat{\neg a}} \sset \orclo \widehat{\neg c}$, and $-\widehat{c} = \overline{\widehat{\neg c}} \sset \orclo \widehat{b}$, and therefore we have two disjoint open sets, $\widehat{\neg c}$ and $\widehat{c}$, such that $\bigcap_{a \in A} \mathord{-}\widehat{a} \sset \orclo \widehat{\neg c}$ and $\orint \mathord{-}\widehat{b} \sset \widehat{c}$.
\end{proof}

\begin{theorem}\label{obj2}
Let $(X,\tau)$ be a $dV$-space. Then $(X,\tau)$ is homeomorphic to $(S_{(\RO(X),\ll)},\sigma)$.
\end{theorem}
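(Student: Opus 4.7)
The plan is to exhibit the natural candidate map $\epsilon : X \to S_{(\RO(X),\ll)}$ defined by $\epsilon(x) = \RO(x) = \{U \in \RO(X) \mid x \in U\}$, and to check that it is a homeomorphism by verifying bijectivity and that the preimage and image of basic opens are basic opens on the other side. Recall that by the preceding lemma $(\RO(X),\ll)$ is genuinely a de Vries algebra (this uses order-normality together with $\RO(X) \subseteq \ROU(X)$), so the target space $S_{(\RO(X),\ll)}$ is well-defined, and condition (iii) in the definition of a $dV$-space guarantees that $\RO(x)$ really is a concordant filter on $(\RO(X),\ll)$, so $\epsilon$ lands in the right set.

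The bijectivity of $\epsilon$ follows almost immediately from the axioms. For injectivity, if $x \neq y$, then since $(X,\tau)$ is $T_0$ and $\RO(X)$ is a basis for $\tau$, there must be a regular open set containing one but not the other, giving $\RO(x) \neq \RO(y)$. For surjectivity, let $F$ be any concordant filter on $(\RO(X),\ll)$; by condition (iii) there exists $x \in X$ with $\concord F = \RO(x)$, and since $F$ is already concordant this yields $F = \RO(x) = \epsilon(x)$.

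For the topological part, the basis $\{\widehat{U} \mid U \in \RO(X)\}$ of the Stone-like topology $\sigma$ on $S_{(\RO(X),\ll)}$ pulls back cleanly: $\epsilon^{-1}(\widehat{U}) = \{x \in X \mid U \in \RO(x)\} = U$, which is open in $\tau$, so $\epsilon$ is continuous. Conversely, since $\RO(X)$ is a basis for $\tau$, it suffices to show $\epsilon$ sends each $U \in \RO(X)$ to an open set; a direct computation using surjectivity shows $\epsilon(U) = \widehat{U}$, so $\epsilon$ is open. Hence $\epsilon$ is a continuous open bijection, i.e. a homeomorphism.

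I do not expect a serious obstacle here: the real work was done in Lemma~\ref{lma3} and Theorem~\ref{obj1}, and the $dV$-space axioms were engineered precisely so that this representation works. The one point that requires mild care is making sure that condition (iii) is being applied correctly — it is stated for \emph{all} filters via the $\concord$-operator, and one has to notice that concordant filters are fixed points of $\concord$ to recover the exact equality $F = \RO(x)$ needed for surjectivity.
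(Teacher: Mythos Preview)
Your proof is correct and follows essentially the same approach as the paper: define the map $x \mapsto \RO(x)$, use $T_0$ for injectivity, condition (iii) for surjectivity, and the computation $\epsilon^{-1}(\widehat{U}) = U$ together with the fact that $\RO(X)$ is a basis to conclude it is a homeomorphism. Your write-up is somewhat more explicit than the paper's (in particular your observation that concordant filters are fixed points of $\concord$, needed for surjectivity, is left implicit there), but the argument is the same.
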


\begin{proof}
Let $f : X \to S_{(\RO(X),\ll)}$ be given by $f(x) = \RO(x)$. Then $f$ is well-defined and surjective by condition (iii), and it is injective because $X$ is $T_0$. Moreover, for any $U \in \RO(X)$, we have that $x \in U$ iff $U \in \RO(x)$ iff $U \in f(x)$ iff $f(x) \in \widehat{U}$. By Theorem \ref{obj1} and since $\RO(X)$ is a basis for $X$, this is enough to conclude that $f$ is open and continuous and therefore a homeomorphism.
\end{proof}

Note that, as a corollary to Lemma \ref{lma3} and Theorem \ref{obj2}, we obtain that any $dV$-space is order-regular.

Let us conclude this section by characterizing $UV$-spaces as a special kind of $dV$-spaces. In order to do so, it is convenient to introduce first the following notion.

\begin{definition}
Let $(X,\tau)$ be a topological space. An open subset of $(X,\tau)$ is \textit{well rounded} if for any closed set $B$ such that $B \sset \orclo U$, there are disjoint open sets $V$ and $W$ such that $B \sset \orclo V$ and $-W \sset \orclo U$.
\end{definition}

Well-rounded subsets of a $dV$-space will play an important role later on when connecting our results with some standard notions of point-free topology. For now, let us note that a topological space in which every open is well-rounded is also order-regular and order-normal. Indeed,  order-normality amounts to the requirement that every regular open set be well-rounded, and order-regularity follows from the fact that $\orclo x$ is closed in every topological space. While not every open subset of a $dV$-space is well rounded, this is true for a special class of those, namely $UV$-spaces.

\begin{lemma}\label{uvchar}

A topological space $(X, \tau)$ is a $UV$-space if and only if it is a $dV$-space such that $(\RO(X), \ll)$ is \textit{zero-dimensional}.

\end{lemma}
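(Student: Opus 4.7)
The approach reduces both directions of the equivalence to a correspondence between the $dV$-dual of a zero-dimensional de Vries algebra and the $UV$-dual of its Boolean subalgebra of self-subordinated elements.

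For any de Vries algebra $V = (B, \prec)$, let $B_0 := \{a \in B : a \prec a\}$. I first observe that $B_0$ is always a Boolean subalgebra of $B$: closure under $\me$ follows from (A3) and (A4) applied to $a \me b \leq a \prec a$ and $a \me b \leq b \prec b$; closure under $\neg$ is (A5); closure under $\jo$ follows from the dual of (A4), itself provable via (A5). If $V$ is moreover zero-dimensional, then $B_0$ is $\prec$-dense in $V$: every $a \prec b$ factors as $a \prec c \prec b$ with $c \in B_0$, and for $c \in B_0$ and $a \in V$, $c \prec a$ coincides with $c \leq a$ by (A2) and (A3).

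Next, I show that for zero-dimensional $V$, the restriction map $\Phi : S_V \to S_{B_0}$ defined by $\Phi(F) = F \cap B_0$ is a homeomorphism onto the $UV$-dual $(S_{B_0}, \sigma)$ of $B_0$. Its inverse sends a filter $G$ on $B_0$ to $F_G := \{a \in B : \exists c \in G,\, c \leq a\}$, which is a concordant filter on $V$ because each $c \in G$ satisfies $c \prec c$ and hence $c \prec a$ for every $a \geq c$. Mutual inversion relies on $\prec$-density; the homeomorphism property follows from the identity $\widehat{a} = \bigcup\{\widehat{c} : c \in B_0, c \leq a\}$ in $S_V$, valid for every $a \in V$ by concordance together with $\prec$-density.

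With this correspondence in hand, the backward direction is immediate: if $(X, \tau)$ is a $dV$-space with $V := (\RO(X), \ll)$ zero-dimensional, Theorem \ref{obj2} gives $(X, \tau) \cong (S_V, \sigma)$, and $\Phi$ yields $(X, \tau) \cong (S_{B_0}, \sigma)$, a $UV$-space by the Bezhanishvili--Holliday duality. For the forward direction, given a $UV$-space $(X, \tau)$ with $B_0 := \mathsf{CO}\ROU(X)$, one has $(X, \tau) \cong (S_{B_0}, \sigma)$; defining $V$ on the MacNeille completion of $B_0$ with $a \prec b$ iff $(\exists c \in B_0)\, a \leq c \leq b$ yields a zero-dimensional de Vries algebra whose self-subordinated part is exactly $B_0$, so $\Phi$ produces a homeomorphism $(X, \tau) \cong (S_V, \sigma)$; Lemma \ref{lma3} then ensures $(X, \tau)$ is a $dV$-space, while Theorem \ref{obj1} identifies $(\RO(X), \ll)$ with $V$, which is zero-dimensional. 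The main technical step is the verification that $\Phi$ is a homeomorphism, which depends essentially on $\prec$-density of $B_0$ in $V$.
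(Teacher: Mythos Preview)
Your argument is correct and shares its core with the paper's: both directions hinge on the Boolean subalgebra $B_0=\{a:a\prec a\}$ and the bijection between concordant filters on the de Vries algebra and proper filters on $B_0$. Your backward direction is essentially identical to the paper's.

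The forward direction differs in packaging. The paper stays inside the given $UV$-space $(X,\tau)\cong(S_B,\sigma)$ and verifies the $dV$-axioms directly: it shows every open set is well rounded (hence order-normality), cites \cite[Prop.~4.3.1]{hol19} for $\RO(X)\subseteq\ROU(X)$, establishes the filter correspondence for condition (iii), and proves zero-dimensionality of $(\RO(X),\ll)$ by a concrete interpolation argument showing $U\ll V$ implies $U\ll\widehat{c}\ll\widehat{c}\ll V$ for some $c\in B$. You instead manufacture a zero-dimensional de Vries algebra $V$ as the MacNeille completion of $B_0=\mathsf{CO}\ROU(X)$ with $\prec$ given by interpolation through $B_0$, use your homeomorphism $\Phi$ to identify $(X,\tau)$ with $(S_V,\sigma)$, and then invoke Lemma~\ref{lma3} and Theorem~\ref{obj1} as black boxes. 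This is more modular and avoids rechecking separation axioms by hand, at the cost of introducing the MacNeille completion (which, by your argument and Theorem~\ref{obj1}, turns out \emph{a posteriori} to coincide with $\RO(X)$). The paper's route, by contrast, yields the extra information that every open in a $UV$-space is well rounded. Neither approach is materially simpler; they are two natural ways to organise the same underlying correspondence.
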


\begin{proof} For the left-to-right direction, suppose $(X, \tau)$ is a $UV$-space. We may therefore view it as $(S_B, \sigma)$ for some  Boolean algebra $B$. This can be used to show that every open set $(X, \tau)$ is well-rounded.  Indeed, let $U = \bigcap_{a \in A} \mathord{-}\widehat{a}$ and $V = \bigcap_{c \in C} \mathord{-}\widehat{c}$ for some $A$, $C$, subsets of $B$ such that $\bigcap_{a \in A} \mathord{-}\widehat{a} \sset \orclo \bigcup_{c \in C} \widehat{c}$. Without loss of generality, we may assume that $C$ is a proper ideal: if $F \in \orclo{\widehat{c'}}$ for some $c' = c_1 \jo \dotsc \jo c_n$ with $c_1,...,c_n \in C$, then there must be some $i \leq n$ such that $\neg c_i \notin F$, and therefore $F \in \orclo \widehat{c_i}$. So let $F = \{ \neg c \mid c \in C\}$ be the dual filter of $C$. Clearly $F \notin \orclo \bigcup_{c \in C} \widehat{c}$, so $A \cap F \neq \emptyset$. This means that there is some $a \in A$ such that $\neg a \in C$. Thus $U \sset \mathord{-}\widehat{a} \sset \orclo \widehat{\neg a}$, and $\overline{\widehat{\neg a}} = \orclo \widehat{\neg a} \sset \orclo \mathord{-}V$. This shows that $(X, \tau)$ satisfies condition (i).

By \cite[Prop.~4.3.1]{hol19}, $\RO(X) \sset \ROU(X)$, so condition (ii) follows from condition (ii) of $UV$-spaces. Finally, condition (iii) follows from condition (iii) on $UV$-spaces once we show there is a one-to-one correspondence between concordant filters on $\RO(X)$ and proper filters on $B$, given by $F \mapsto \{a \in B \mid \widehat{a} \in F\}$. Recall first the observation that for any compact open set $U$ in a $UV$-space, $\overline{U} = \orclo U$ \cite[Prop.~4.1]{hol19}. This means that $\widehat{a} \ll \widehat{a}$ for any $W \in \mathsf{CO}\ROU(X)$. Now assume $U \ll V$ for some $U,V \in \RO(X)$. By \cite[Fact 8.2]{hol19}, we may write $U = \bigcup_{a \in A} \widehat{a}$ and $V = \bigcup_{c \in C} \widehat{c}$ for some ideals $A,C \sset B$. It is straightforward to see that $\overline{\bigcup_{a \in A} \widehat{a}} \sset \orclo \bigcup_{c \in C} \widehat{c}$ implies that there is $c \in C$ such that $a \leq c$ for all $a \in A$, and thus that $\overline{U} \sset \orclo \widehat{c}$ for some $c \in C$. Since $\widehat{c} \in \mathsf{CO}\ROU(X)$, we also have that $\overline{\widehat{c}} \sset \orclo \widehat{c} \sset \orclo V$, hence $U \ll \widehat{c} \ll \widehat{c} \ll V$. This shows that $\RO(X)$ is zero-dimensional. Moreover, if $F$ and $G$ are distinct concordant filters on $\RO(X)$, without loss of generality there is $V \in F \setminus G$. But then there is some $U \in F$ such that $U \ll V$, hence $U \ll \widehat{c} \ll V$ for some $c \in B$. This shows that the map $F \mapsto \{c \in B \mid \widehat{c} \in F\}$ is injective. For surjectivity, note that given any proper filter $G$ on $B$, $G' = \{ U \in \RO(X) \mid \exists c \in G: \widehat{c} \ll U\}$ will be a preimage of $G$. This completes the proof that $X$ is a $dV$-space such that $\RO(X,\ll)$ is a zero-dimensional de Vries algebra.

Conversely, suppose that $(X,\tau)$ is a $dV$-space such that $(\RO(X),\ll)$ is zero-dimensional. Let $B = \{ U \in \RO(X) \mid U \prec U\}$. Clearly, $B$ is a Boolean algebra, so we may consider its dual $UV$-space $UV(B)$. Since points in $X$ are in one-to-one correspondence with concordant filters on $\RO(X)$, by the same argument as above, there is a one-to-one correspondence between $X$ and $UV(B)$, given by $x \mapsto \{U \in B \mid x \in U\}$. As this map is easily seen to be a homeomorphism, it follows that $(X,\tau)$ is a $UV$-space.
\end{proof}

\section{Morphisms}\label{morph}

Having established the object part of our duality, the last step to obtain our duality result is to isolate the adequate notion of morphism between $dV$-spaces. It turns out to be a natural generalization of $UV$-maps:

\begin{definition}
Let $(X, \tau_1)$ and $(Y,\tau_2)$ be $dV$-spaces, and let $\leq_1$ and $\leq_2$ be the specialization orders induced by $\tau_1$ and $\tau_2$ respectively. A \textit{de Vries map} ($dV$-map for short) $f : X \to Y$ is a continuous function that is also weakly dense, i.e., is such that for any $x \in X$, if $f(x) \leq_2 y$ for some $y \in Y$, then there is $x' \geq_1 x$ such that $y \leq_2 f(x')$.
\end{definition}

Let $\cat{dVS}$ be the category of $dV$-spaces and $dV$-maps between them.
It is straightforward to verify that if $f : (X,\tau_1) \to (Y,\tau_2)$ is weakly dense, then for any upward-closed $V \sset Y$, $\orclo f\inv[V] = f\inv [\orclo V]$. This implies in particular that the preimage of any order-regular open set under a weakly dense map is order-regular open. This fact plays a role in the proof of the following lemma:

\begin{lemma} \label{mor1}
Let $f: (X,\tau_1) \to (Y,\tau_2)$ be a $dV$-map between $dV$-spaces. Then $\Phi(f) : (\RO(Y),\ll_2) \to (\RO(X),\ll_1)$, given by $\Phi(f)(U) = (f\inv[U])\reg$ for any $U \in \RO(Y)$, is a de Vries morphism.
\end{lemma}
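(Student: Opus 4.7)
The plan is to verify the four axioms (V1)--(V4) defining a de Vries morphism for the map $\Phi(f)(U) = (f\inv[U])\reg$. Axiom (V1) is immediate since $f\inv[\emptyset] = \emptyset$, and axiom (V2) reduces to the standard identity $(A \cap B)\reg = A\reg \cap B\reg$ for opens $A, B$ in any topological space, applied to $A = f\inv[U]$ and $B = f\inv[V]$. The real work lies in (V3) and (V4).

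For (V3), suppose $U \ll_2 V$, i.e., $\overline{U} \sset \orclo V$ in $Y$. The first task is to simplify $\neg \Phi(f)(\neg U)$: using $\neg U = -\overline{U}$, the fact that preimage commutes with complements, and the identity $\overline{P\reg} = \overline{P}$ for open $P$, a direct computation shows that $\neg \Phi(f)(\neg U)$ equals the topological interior of $f\inv[\overline{U}]$, which is a regular open set contained in the closed set $f\inv[\overline{U}]$. Taking closures, $\overline{\neg \Phi(f)(\neg U)} \sset f\inv[\overline{U}]$. I then chain $f\inv[\overline{U}] \sset f\inv[\orclo V] = \orclo f\inv[V] \sset \orclo \Phi(f)(V)$, where the middle equality is the weak-density identity recorded just before the lemma (applied to the upward-closed set $V$), and the last inclusion uses $f\inv[V] \sset \Phi(f)(V)$ together with monotonicity of $\orclo$. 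This yields $\overline{\neg \Phi(f)(\neg U)} \sset \orclo \Phi(f)(V)$, i.e., (V3).

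The key to (V4) is a general observation about $dV$-spaces: for any $V \in \RO(Y)$ and any $y \in V$, the filter $\RO(y)$ is concordant (by condition (iii) of $dV$-space) and contains $V$, so by concordance there exists $W \in \RO(y)$ with $W \ll_2 V$, witnessing $y \in W$. Hence $V = \bigcup\{W \in \RO(Y) \mid W \ll_2 V\}$ as sets, the reverse inclusion following from (A2). Applying $f\inv$ to this identity gives $f\inv[V] = \bigcup_{W \ll_2 V} f\inv[W]$. Then, using the identity $(\bigcup_i P_i\reg)\reg = (\bigcup_i P_i)\reg$ for families of open sets $P_i$ (again a consequence of $\overline{P\reg} = \overline{P}$), I obtain $\Phi(f)(V) = (\bigcup_{W \ll_2 V} f\inv[W])\reg = (\bigcup_{W \ll_2 V} \Phi(f)(W))\reg = \bigjo\{\Phi(f)(W) \mid W \ll_2 V\}$, establishing (V4).

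The main obstacle to expect is (V4): without the setwise identity $V = \bigcup\{W \mid W \ll_2 V\}$, one would have to transfer the density of this union in $V$ through $f\inv$, which seems to require hypotheses on $f$ stronger than weak density. Condition (iii) of $dV$-spaces, via the concordance of each $\RO(y)$, removes this difficulty entirely and reduces (V4) to routine manipulations with the regularization $\reg$.
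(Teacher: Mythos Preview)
Your proof is correct. Axioms (V1)--(V3) are handled essentially as in the paper: (V1) and (V2) are identical, and for (V3) you compute $\neg\Phi(f)(\neg U)$ explicitly as the interior of $f\inv[\overline{U}]$ and then chain inclusions, whereas the paper reaches the same conclusion by complementing the inclusion $f\inv[\overline{U}]\subseteq\orclo f\inv[V]$ twice and invoking order-regularity of $f\inv[V]$. Both arguments rest on the same weak-density identity $f\inv[\orclo V]=\orclo f\inv[V]$ and are reorganizations of one another.

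For (V4), however, you take a genuinely shorter route. The paper argues pointwise: given $x\in f\inv[V]$ and $x'\geq_1 x$, it finds $U\ll_2 V$ with $f(x')\in\orclo U$, then invokes weak density again to produce $z\geq_1 x'$ with $z\in\Phi(f)(U)$, concluding $f\inv[V]\subseteq\orint\orclo\bigl(\bigcup_{U\ll_2 V}\Phi(f)(U)\bigr)$ and hence inside the regularization. You instead observe directly from condition (iii) of $dV$-spaces that $V=\bigcup\{W\mid W\ll_2 V\}$ \emph{as sets}, so $f\inv[V]=\bigcup_{W\ll_2 V} f\inv[W]$, and the identity $(\bigcup_i P_i)\reg=(\bigcup_i P_i\reg)\reg$ finishes the job with no further appeal to weak density. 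Your argument thus isolates that weak density is only needed for (V3), while (V4) follows from the concordance clause of the target space alone; the paper's pointwise approach obscures this separation but has the mild advantage of making the role of the specialization order visible throughout.
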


\begin{proof}
We check the four conditions on de Vries morphisms in turn:

\begin{description}
    \item[(V1)] $\bm{\Phi(f)(\emptyset) = \emptyset}$. Immediate.
    \item[(V2)] $\bm{\Phi(f)(U \cap V) = \Phi(f)(U) \cap \Phi(f)(V)}$. Simply compute that: \begin{align*}
        \Phi(f)[U] \cap \Phi(f)[V] &= (f\inv[U])\reg \cap (f\inv[V])\reg \\&= (f\inv[U] \cap f\inv[V])\reg \\ &= \Phi(f)(U \cap V).
    \end{align*}
    \item[(V3)] $\bm{U \ll_2 V}$ \textbf{implies} $\bm{(\Phi(f)(U^\bot))^\bot \ll_1 \Phi(f)(V)}$. Suppose $\overline{U} \sset \orclo V$. This means that $f\inv[\overline{U}] \sset f\inv[\orclo V] = \orclo f\inv[V]$, since $f$ is weakly dense. Complementing, this gives us \[-\orclo f\inv[V] \sset f\inv[U^\bot] \sset \Phi(f)(U^\bot),\] which, using the fact that $f\inv[V]$ is order-regular open, yields \[-\orclo (\Phi(f)(U^\bot)) \sset f\inv[V] \sset \Phi(f)(V).\] Taking order-closure and complements again, this yields \[\mathord{-}\orclo (\Phi(f)(V)) \sset \Phi(f)(U^\bot) = (\Phi(f)(U^\bot))\reg ,\] and therefore \[\overline{(\Phi(f)(U^\bot))^\bot} \sset \orclo (\Phi(f)(V)).\]
    \item[(V4)] $\bm{\Phi(f)(V) = (\bigcup \{ \Phi(U) \mid U \ll_2 V\})\reg}$. The right-to-left direction is immediate. For the converse, suppose that $f(x) \in V$, and let $x' \geq_1 x$. Then $f(x') \in V$, which implies that there is some $U \ll_2 V$ such that $f(x') \in \orclo U$. This means that $f(x') \leq_2 y$ for some $y \in U$. Since $f$ is weakly-dense, there is $z \geq_1 x'$ such that $f(z) \geq_2 y$, and therefore $z \in \Phi(f)(U)$. This shows that $f\inv[V] \sset (\bigcup \{ \Phi(U) \mid U \ll_2 V\})\reg$, which clearly implies that $\Phi(f)(V) \sset (\bigcup \{ \Phi(U) \mid U \ll_2 V\})\reg$. 
\end{description}
Therefore $\Phi(f)$ is a de Vries morphism.
\end{proof}

It follows that we may define a contravariant functor $\Phi: \cat{dVS} \to \cat{deV}$ by letting $\Phi(X,\tau) = (\RO(X),\ll)$ for any $dV$-space $(X, \tau)$ and mapping any $f : (X,\tau_1) \to (Y,\tau_2)$ to $\Phi(f)$ as in Lemma \ref{mor1}. It is straightforward to verify that $\Phi$ preserves composition and identity arrows. Going from de Vries algebras to $dV$-spaces requires the following result:

\begin{lemma} \label{mor2}

Let $h : V_1 \to V_2$ be a de Vries morphism. Then the function $\Lambda(h) : (S_{V_2},\sigma_2) \to (S_{V_1},\sigma_1)$, given by $\Lambda(h)(F) = \concord h\inv[F]$ for any $F \in S_{V_2}$, is a $dV$-map.

\end{lemma}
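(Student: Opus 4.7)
The plan is to verify three things about $\Lambda(h)$: well-definedness as a map into $S_{V_1}$, continuity, and weak density. Well-definedness amounts to checking that $\concord h\inv[F]$ is always a concordant filter on $V_1$. First I would note that $h$ preserves finite meets and hence order, and that $h(1)=1$ (since $1 \prec_1 1$ gives $\neg h(\neg 1) \prec_2 h(1)$ by (V3), i.e.\ $1 \prec_2 h(1)$, so $h(1)=1$ by (A2)); hence $h\inv[F]$ is a proper filter on $V_1$. Using (A3) and (A4), the set $\concord h\inv[F]$ is then upward-closed and meet-closed, it contains $1$ by (A1), and it equals $\concord\concord h\inv[F]$ via (A6), so it is a concordant filter.

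For continuity, since by Lemma \ref{basic}(ii) the sets $\widehat{a}$ form a basis of $\sigma_1$, it suffices to unpack $\Lambda(h)\inv[\widehat{a}]$. A concordant filter $F$ on $V_2$ belongs to this preimage iff there is $b \prec_1 a$ with $h(b)\in F$; hence
\[
\Lambda(h)\inv[\widehat{a}] \;=\; \bigcup_{b \prec_1 a} \widehat{h(b)},
\]
which is open in $\sigma_2$.

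The main work is weak density. Since the specialization order on each $S_V$ is inclusion by Lemma \ref{basic}(ii), the statement becomes: for any $F \in S_{V_2}$ and any $G \in S_{V_1}$ with $\Lambda(h)(F) \sset G$, find $F' \in S_{V_2}$ with $F \sset F'$ and $G \sset \Lambda(h)(F')$. Because $G$ is concordant, it would suffice to produce $F'$ containing $F \cup h[G]$, since every $a \in G$ has a witness $b \prec_1 a$ in $G$ with $h(b) \in F'$. I would let $H$ be the filter generated by $F \cup h[G]$ and take $F' = \concord H$. Granting that $H$ is proper, one checks $F \sset \concord H$ from the concordance of $F$, and $h[G] \sset \concord H$ by picking, for $b \in G$, some $b' \in G$ with $b' \prec_1 b$: then (V3) gives $\neg h(\neg b')\prec_2 h(b)$, while $h(b') \wedge h(\neg b') = h(0)=0$ yields $h(b') \leq \neg h(\neg b')$, so $\neg h(\neg b') \in H$ and thus $h(b) \in \concord H$.

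The obstacle is showing $H$ is proper, and this is the step that genuinely uses the hypothesis $\Lambda(h)(F) \sset G$. Suppose for contradiction that $f \me h(b_1) \me \dotsc \me h(b_n) = 0$ for some $f \in F$ and $b_1,\dots,b_n \in G$. Using (V2) and letting $b = b_1 \me \dotsc \me b_n \in G$, we get $f \leq \neg h(b)$. Apply concordance of $G$ twice to pick $b'' \prec_1 b' \prec_1 b$ with $b',b'' \in G$. By (A5), $\neg b \prec_1 \neg b'$, so (V3) gives $\neg h(b) \leq h(\neg b')$, hence $h(\neg b') \in F$. Then $b'' \prec_1 b'$ gives $\neg b' \prec_1 \neg b''$ by (A5), which combined with $h(\neg b')\in F$ yields $\neg b'' \in \concord h\inv[F] = \Lambda(h)(F) \sset G$. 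Since $b'' \in G$ as well, $0 = b'' \me \neg b'' \in G$, contradicting properness of $G$. This completes the verification.
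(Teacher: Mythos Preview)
Your proof is correct and follows essentially the same approach as the paper's: the continuity computation is identical, and for weak density both arguments build a witness filter from $F$ and (an image of) $G$, using the inclusion $\Lambda(h)(F)\subseteq G$ in the same way to derive a contradiction if the filter were improper. The only cosmetic difference is that the paper generates its witness directly from $F\cup\{\neg h(\neg c)\mid c\in G\}$ and shows this is already concordant, whereas you generate from $F\cup h[G]$ and then apply $\concord$; since $h(c)\leq \neg h(\neg c)$, these yield comparable filters and the verification is essentially parallel.
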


\begin{proof}
Let us first show that $\Lambda(h)$ is continuous. For any $a \in V_1$ we compute:
\begin{align*}
    \Lambda(h)\inv[\widehat{a}] &= \{F \in S_{V_2} \mid \Lambda(h)(F) \in \widehat{a}\} \\ &= \{ F \in S_{V_2} \mid a \in \concord h\inv[F]\} \\ 
    &= \{F \in S_{V_2} \mid \exists c \prec a: h(c) \in F\} \\
    &= \bigcup_{c \prec a} \widehat{h(c).}
\end{align*}
Now we check that $\Lambda(h)$ is weakly dense. Let $F \in S_{V_2}$ and $G \in S_{V_1}$ be such that $\concord h\inv[F] \sset G$. I claim that \[H = \{a \in V_2\mid a \geq \neg h (\neg c) \me d \text{ for some }c \in G, d\in F\}\] is a concordant filter. To see that this is a proper subset of $V_2$, note that if $h(\neg c) \in F$ for some $c \in G$, then there is $c' \prec c \in G$, which implies that $\neg c \prec \neg c'$ and thus that $\neg c' \in G$, a contradiction. To see that $H$ is a filter, it is enough to verify that for any $c_1, c_2 \in G$, $\neg h (\neg c_1) \me \neg h (\neg c_2) \in H$. Since $G$ is concordant, there is $c' \in G$ such that $c' \prec c_1 \me c_2$, which implies that \[\neg h (\neg c) \prec \neg h (\neg(c_1 \me c_2)) \leq \neg h (\neg c_1) \me \neg h (\neg c_2),\] and therefore $\neg h (\neg c_1) \me \neg h (\neg c_2) \in H$. A similar argument shows that $\concord H = H$, which completes the proof of the claim.

By construction of $H$, $F \sset H$. Moreover, if $c \in G$, then there are $c_1, c_2 \in G$ such that $c_2 \prec c_1 \prec c$. Then $\neg h (\neg c_2) \prec h(c_1)$, which shows that $c \in \Lambda(h)[H]$, and therefore $G \sset \Lambda(h)[H]$. This completes the proof that $\Lambda(h)$ is a $dV$-map.
\end{proof}

We can therefore construct a functor $\Lambda : \cat{deV} \to \cat{dVS}$ by mapping any de Vries algebra $V$ to $\Lambda(V) = (S_V,\sigma)$ and any de Vries morphism $h$ to $\Lambda(h)$ as in Lemma \ref{mor2}. Again, it is straightforward to verify that $\Lambda$ preserves composition and identity arrows. We conclude with the main result of this paper:

\begin{theorem} \label{mainthm}
The functors $\Phi$ and $\Lambda$ establish a dual equivalence between the categories $\cat{deV}$ and $\cat{dVS}$.
\end{theorem}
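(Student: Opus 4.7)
The plan is to show that the object-level isomorphisms from Theorems \ref{obj1} and \ref{obj2}, namely $\epsilon_V : V \to \Phi\Lambda V$, $a \mapsto \widehat{a}$, and $\eta_X : X \to \Lambda\Phi X$, $x \mapsto \RO(x)$, are natural in $V$ and $X$ respectively. Together with the functoriality of $\Phi$ and $\Lambda$ already noted, this will establish the dual equivalence.

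Naturality of $\epsilon$ is the simpler direction. For a de Vries morphism $h : V_1 \to V_2$ and $a \in V_1$, we have $\Phi\Lambda(h)(\widehat{a}) = (\Lambda(h)\inv[\widehat{a}])\reg = (\bigcup_{c \prec a} \widehat{h(c)})\reg$ using the calculation already carried out in the proof of Lemma \ref{mor2}. By Corollary \ref{cor1}, the regular open hull of $\bigcup_{b \in A}\widehat{b}$ in $\RO(S_{V_2})$ is $\widehat{\bigjo A}$; combined with axiom (V4) this gives $\Phi\Lambda(h)(\widehat{a}) = \widehat{\bigjo\{h(c) \mid c \prec a\}} = \widehat{h(a)}$.

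Naturality of $\eta$ demands $\concord \Phi(f)\inv[\RO(x)] = \RO(f(x))$ for every $dV$-map $f : X_1 \to X_2$ and $x \in X_1$. The inclusion $\RO(f(x)) \sset \concord \Phi(f)\inv[\RO(x)]$ is direct: if $f(x) \in U$, concordance of $\RO(f(x))$ yields some $V \ll U$ with $f(x) \in V$, whence $x \in f\inv[V] \sset \Phi(f)(V)$ places both $U$ and $V$ into $\Phi(f)\inv[\RO(x)]$, so $U \in \concord \Phi(f)\inv[\RO(x)]$.

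The reverse inclusion is the key step and the main obstacle; it is where weak density and the hypothesis $\RO(X_2) \sset \ROU(X_2)$ are essential. Suppose $V \ll U$ with $x \in \Phi(f)(V) = (f\inv[V])\reg$; the goal is $f(x) \in U$. Since $U$ is order-regular open, $U = \orint \orclo U$, so it suffices to show that every $y \geq_2 f(x)$ lies in $\orclo U$. Given such $y$, weak density applied to $f(x) \leq_2 y$ produces $x' \geq_1 x$ with $y \leq_2 f(x')$. Because $\Phi(f)(V)$ is open and therefore upward closed in the specialization order, $x' \in \Phi(f)(V) \sset \overline{f\inv[V]}$, and continuity forces $f(x') \in \overline{V} \sset \orclo U$; thus some $z \in U$ satisfies $y \leq_2 f(x') \leq_2 z$, giving $y \in \orclo U$ as required.
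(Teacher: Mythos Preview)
Your proof is correct and follows the same overall plan as the paper: verify that $\Phi\Lambda(h)(\widehat{a}) = \widehat{h(a)}$ and that $\Lambda\Phi(f)(\RO(x)) = \RO(f(x))$, the latter via weak density to find $x' \geq_1 x$ with $y \leq_2 f(x')$ and then establishing $f(x') \in \overline{V}$. The one notable difference is in how you reach $f(x') \in \overline{V}$: the paper unfolds the meaning of $x \in (f\inv[V])\reg$ to extract a regular open neighbourhood $Z$ of $x$ and argues by contradiction that $f(x') \notin V^\bot$, whereas you use the slicker observation that $(f\inv[V])\reg \sset \overline{f\inv[V]}$ together with the standard fact $f[\overline{A}] \sset \overline{f[A]}$ for continuous $f$; this buys you a shorter and more transparent argument at no cost.
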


\begin{proof}
In light of Theorems \ref{obj1} and \ref{obj2}, we only need to verify that:
\begin{enumerate}
    \item for any de Vries morphism $h : V_1\to V_2$, $\Phi\Lambda(h)(\widehat{a}) = \widehat{h(a)}$ for any $a \in V_1$;
    \item for any $dV$-map $f : (X,\tau_1) \to (Y, \tau_2)$, $\Lambda\Phi(f)(\RO(x)) = \RO(f(x))$ for any $x \in X$.
\end{enumerate}
For (i), it is enough to compute that:
\begin{align*}
    \Phi\Lambda(h)(\widehat{a}) &= ((\Lambda(h))\inv[\widehat{a}])\reg\\
    &=(\bigcup\{\widehat{h(b)} \mid b \prec a\})\reg\\
    &=\widehat{\bigjo\{h(b) \mid b \prec a\}}\\
    &=\widehat{h(a)}.
\end{align*}
For (ii), we first compute that:
\begin{align*}
    \Lambda\Phi(f)(\RO(x)) &= \concord (\Phi(f))\inv[\RO(x)]\\
    &=\concord\{U \mid \Phi(f)(U) \in \RO(x)\}\\
    &=\concord \{U \mid (f\inv[U])\reg \in \RO(x)\}.
\end{align*}
Now if $V \in \RO(f(x))$, then there is $U \ll V$ such that $U \in \RO(f(x))$, and therefore $x \in f\inv[U] \sset (f\inv [U])\reg$, and hence $V \in \Lambda\Phi(\RO(x))$. For the converse direction, suppose that $x \in (f\inv[U])\reg$ and that $U \ll V$ for some $U, V \in \RO(Y)$. I claim that for any $y \geq_2 f(x)$, $y \in \overline{U}$. Since $\overline{U} \sset \orclo V$, this implies that $f(x) \in \orint \orclo V$, and therefore that $v \in \RO(f(x))$. For the proof of the claim, note first that $x \in (f\inv[U])\reg$ implies that there is some regular open set $Z \in \RO(x)$ such that for any $x' \in Z$ and any open set $W$, $x' \in Z \cap W$ implies that $W \cap f\inv[U] \neq \emptyset$. Now fix some $y \in Y$ such that $f(x) \leq_2 y$. Since $f$ is weakly dense, there is $x' \geq_1 x$ such that $y \leq_2 f(x')$. The claim is proved if $f(x') \in \overline{U}$. Assume towards a contradiction that this is not the case. Then $x' \in f\inv[U^\bot]$, which is open since $f$ is continuous. But $x \leq_1 x'$ implies that $x' \in Z$, so $f\inv[U^\bot] \cap f\inv[U] \neq \emptyset$, a contradiction. This completes the proof.
\end{proof}

\section{Point-Free and Hyperspace Approaches}\label{hyp}
In this section, we relate $dV$-spaces to compact regular frames. Because both the equivalence between de Vries algebras and compact regular frames on the one hand, and the duality between de Vries algebras and $dV$-spaces on the other hand, do not rely on the Axiom of Choice, we already know that there is a choice-free duality between compact regular frames and $dV$-spaces. In order to describe this duality more precisely, we first need the following lemma:

\begin{lemma}\label{round}
For any de Vries algebra $V = (B, \prec)$, there is an order isomorphism between the poset $w\ORO(\Lambda(V))$ of well-rounded $\ORO$ subsets of $\Lambda(V)$ and the round ideals on $V$.
\end{lemma}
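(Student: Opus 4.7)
The plan is to construct explicit inverse maps. For a round ideal $I$, define $\Psi(I) = \bigcup_{a \in I} \widehat{a}$; for a well-rounded $\ORO$ subset $U$, define $\Theta(U) = \{a \in V : \exists b, a \prec b \text{ and } \overline{\widehat{b}} \sset \orclo U\}$. Both maps are manifestly monotone, so the real work is well-definedness and mutual inversion.

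Verifying $\Theta(U)$ is a round ideal is routine: downward-closure comes from (A3); closure under binary joins uses the identity $\overline{\widehat{b_1 \jo b_2}} = \overline{\widehat{b_1}} \cup \overline{\widehat{b_2}}$ (which follows directly from Lemma \ref{basic}(iv)) together with the (A5)/(A4) consequence that $a_1 \prec b_1$ and $a_2 \prec b_2$ imply $a_1 \jo a_2 \prec b_1 \jo b_2$; roundness then follows from (A6). The key ingredient for analyzing $\Psi(I)$ is the set $\neg I := \{\neg a : a \in I\}$: it is a proper filter on $V$ (properness from properness of $I$; filter properties from the ideal properties of $I$) and is concordant because for any $\neg a \in \neg I$, roundness supplies $b \in I$ with $a \prec b$, hence $\neg b \prec \neg a$ by (A5) with $\neg b \in \neg I$. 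Critically, $\neg I$ lies outside $\orclo \Psi(I)$: any concordant extension of $\neg I$ containing some $a \in I$ would also contain $\neg a$, violating properness.

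Showing $\Psi(I)$ is $\ORO$ then amounts to verifying that if $F \notin \Psi(I)$ (so $F \cap I = \emptyset$), the filter $G_0$ generated by $F \cup \neg I$ is a proper concordant extension of $F$ outside $\orclo \Psi(I)$: properness uses that $c \in F$ with $c \me \neg a = 0$ for some $a \in I$ would force $c \leq a$ and hence $c \in I$, contradicting disjointness; concordance combines concordance of $F$ with roundness of $I$ via (A3) and (A4); and $G_0 \notin \orclo \Psi(I)$ by the same argument as for $\neg I$. For well-roundedness, given a closed $B \sset \orclo \Psi(I)$, the fact $\neg I \in -B$ (since $\neg I \notin \orclo \Psi(I)$) lets us find a basic open $\widehat{\neg a} \ni \neg I$ contained in the open set $-B$, yielding $a \in I$ with $B \sset -\widehat{\neg a} = \overline{\widehat{a}}$. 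Roundness then provides $b \in I$ with $a \prec b$, giving $B \sset \overline{\widehat{a}} \sset \orclo \widehat{b}$ via Lemma \ref{prox}. Since $\widehat{b}$ is regular open and the space is order-normal, $\widehat{b}$ is itself well-rounded, supplying disjoint opens $V, W$ with $-W \sset \orclo \widehat{b} \sset \orclo \Psi(I)$.

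Finally, for the inverse identities: $\Theta \Psi(I) = I$ reduces to $\overline{\widehat{b}} \sset \orclo \Psi(I) \Leftrightarrow b \in I$. The backward direction combines roundness with Lemma \ref{prox}; the forward direction uses $\neg I$ as the witness, since $\neg b \notin \neg I \Leftrightarrow b \notin I$, so $b \notin I$ gives $\neg I \in \overline{\widehat{b}} \setminus \orclo \Psi(I)$. For $\Psi \Theta(U) = U$, the inclusion $\Psi \Theta(U) \sset U$ follows because any $a \prec b$ with $\overline{\widehat{b}} \sset \orclo U$ makes $\widehat{a} \sset \widehat{b}$ an up-set contained in $\orclo U$, hence in $\orint \orclo U = U$ by $\ORO$-ness; the reverse inclusion uses concordance of $F \in U$ to extract $c' \in F$ with $c' \prec c$ for some basic $\widehat{c} \sset U$, then applies well-roundedness of $U$ to $\overline{\widehat{c'}} \sset \orclo \widehat{c} \sset \orclo U$ (the first inclusion via Lemma \ref{prox}) to produce $b$ with $c' \prec b$ and $\overline{\widehat{b}} \sset \orclo U$, placing $c' \in \Theta(U) \cap F$. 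The subtlety to watch for throughout is that $\overline{\widehat{a}}$ and $\orclo \widehat{a}$ can differ (a filter $F$ with $\neg a \notin F$ need not concordantly extend to one containing $a$), so one must consistently invoke Lemma \ref{prox} rather than conflate topological and order closures.
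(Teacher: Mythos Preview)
Your proof is correct and follows essentially the same route as the paper: the forward map $\Psi$ coincides with the paper's $\alpha$, and your $\Theta(U)$ agrees with the paper's $\beta(U)=\{b:\overline{\widehat{b}}\sset\orclo U\}$ once $U$ is well-rounded (your extra ``$\exists b\succ a$'' clause just bakes roundness into the definition, shifting the appeal to well-roundedness from the verification that $\Theta(U)$ is round to the inclusion $U\sset\Psi\Theta(U)$). The only other cosmetic difference is that for well-roundedness of $\Psi(I)$ you invoke the general fact that regular opens in an order-normal space are well-rounded, whereas the paper argues directly; both arguments hinge on the same witness filter $\neg I$.
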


\begin{proof}
Let $\mathfrak{R}(V)$ be the frame of all round ideals of $V$ and $w\ORO(\Lambda(V))$ the poset of all well-rounded $\ORO$ subsets of $\Lambda(V)$ ordered by inclusion. Define $\alpha: \mathfrak{R}(V) \to w\ORO(\Lambda(V))$ as $I \mapsto \bigcup_{b \in I} \widehat{b}$ and $\beta: w\ORO(\Lambda(V)) \to \mathfrak{R}(V)$ as $ U \mapsto \{b \in B \mid \overline{\widehat{b}} \sset \orclo U\}$. I claim that $\alpha$ and $\beta$ are order preserving and inverses of one another.

First, let us verify that $\alpha(I)$ is a well-rounded $\ORO$ set for any round ideal $I$. Clearly, for any round ideal $I$, $\alpha(I)$ is open. To see that it is order-regular open, suppose $F \notin \alpha(I)$ for some concordant filter $F$, and consider the set $G = \{c \me \neg d\mid c \in F, d \in I\}$. I claim that $G \in \Lambda(V)$. Since $I$ is round, $I^\delta = \{\neg d \mid d \in I\}$ is a concordant filter, so by Lemma \ref{filt} we only need to verify that $c \me \neg d \neq 0$ for any $c \in F, d \in I$. But this follows immediately from the assumption that $F \notin \alpha(I)$. Thus $G \in \Lambda(V)$, and clearly we have that $F \sset G$ and $G \notin \orclo \alpha(I)$. Thus $F \notin \orint \orclo \alpha(I)$, which shows that $\alpha(I) \in \ROU(\Lambda(V))$. Finally, let us check that $\alpha(I)$ is well-rounded. Suppose  $W \sset \orclo \alpha(I)$ is a closed set of the form $\bigcap_{a \in A} \mathord{-}\widehat{a}$ for some $A \sset B$. Note that $I^\delta$ is a concordant filter and clearly $I^\delta \notin \orclo \alpha(I)$, so $A \cap I^\delta \neq \emptyset$. This means that $\neg a \in I$ for some $a \in A$. But then $\widehat{\neg a}$ and $\widehat{a}$ are the required open sets. This completes the proof that $\alpha(I) \in w\ORO(\Lambda(V))$.  

Conversely, let us show that for any $w\ORO$ set $U$, $\beta(U)$ is a round ideal. Clearly, $\beta(U)$ is downward closed. Now suppose we have $a,b \in V$ such that $\overline{\widehat{a}}$, $\overline{\widehat{b}} \sset \orclo U$. Then $\overline{\widehat{a} \cup \widehat{b}} = \overline{\widehat{a \jo b}}\sset \orclo U$. Since $U$ is well-rounded, there must be disjoint open sets $W_1,W_2$ such that $\overline{\widehat{a \cup b}} \sset \orclo W_1$ and $-W_2 \sset \orclo U$. By Theorem \ref{obj1}, $W_1\reg = \widehat{c}$ for some $c \in V$, and it is straightforward to verify that $\overline{\widehat{a \jo b}} \sset \orclo \widehat{c}$ and $\overline{\widehat{c}} \sset \orclo U$. This shows that $a \jo b \prec c$ and that $c \in \beta(U)$, establishing that $\beta(U)$ is a round ideal.

It is immediate to see that both maps are order preserving, so we only need to show that they are inverses of one another. Let $I$ be a round ideal. If $b \in I$, then $b \prec a$ for some $a \in I$. But then $\overline{\widehat{b}} \sset \orclo \widehat{a} \sset \orclo \alpha(I)$, so $b \in \beta\alpha(I)$. Conversely, assume $b \notin I$, and let $F = \{ c \me \neg d\mid b \prec c, d \in I\}$. If $c \me \neg d \leq \neg b$ for some $d \in I$ and $c$ such that $b \prec c$, then $b \me \neg d \prec c \me \neg d \leq \neg b$, hence $b \me \neg d \leq b \me \neg d \me \neg b \leq 0$. But this implies that $b \leq d$ and thus that $b \in I$, contradicting our assumption. Thus $\neg b \notin F$. By Lemma \ref{filt}, this shows that $F$ is a concordant filter and moreover $F \in \overline{\widehat{b}}$ by Lemma \ref{basic} (iv). But clearly $F \notin \orclo \alpha(I) = \bigcup_{d \in I} \orclo \widehat{d}$. By contraposition, it follows that if $\overline{\widehat{b}} \sset \orclo \alpha(I)$, then $b \in I$. This shows that $\beta\alpha(I) = I$ for any round ideal $I$.

Similarly, if $F \in U$ for $U \in w\ORO(\Lambda(V))$, then since $U$ is open there must be some $a \in F$ such that $\widehat{a} \sset U$. Since $F$ is concordant, there is $b \prec a$ for some $b \in F$. But then $F \in \widehat{b}$ and $\overline{\widehat{b}} \sset \orclo \widehat{a} \sset \orclo U$, so $F \in \alpha\beta(U)$. Conversely, suppose $F \in \alpha\beta(U)$. Then there is $a \in F$ such that $\overline{\widehat{a}} \sset \orclo U$. Since $\overline{\widehat{a}}=-\widehat{\neg a}$ and for any concordant $G\rset F$, $\neg a \notin G$, it follows that $F \in \orint \orclo U = U$. This shows that $\alpha\beta(U) = U$, which completes the proof. 
\end{proof}

As a consequence, the well-rounded $\ORO$ subsets of any $dV$-space form a compact regular frame, and we can lift this correspondence to a functor $w\ORO: \cat{deV} \to \cat{KRFrm}$. To go from compact regular frames to $dV$-spaces, it is enough to recall that the round ideals on a de Vries algebra $V$ are precisely the duals of concordant filters on $V$. Thus given a compact regular frame $L$, we may simply define the topological space $\Xi(L) = (L^-,\delta)$, where $L^- = L \setminus {1_L}$ and $\delta$ is the topology generated by sets of the form $\check{a} = \{b \mid \neg a \prec b\}$ for any $a \in L$. Indeed, since $L$ is isomorphic to $\mathfrak{R}(B(L))$, we may think of any $b \in L$ as a round ideal $I_b$ on the de Vries algebra $(B(L), \prec)$ such that for any $b \in L$ and $c \in B(L)$, $c \prec b$ iff $\neg c \in I_b$. But since $B(L)=\{\neg a \mid a \in L\}$, we therefore have for any $a \in L$:
\begin{align*}
    \check{a} &= \{b \in L^- \mid \neg a \prec b\} \\
    &= \{b \in L^- \mid \neg\neg a \in I_b\} \\
    &= \{b \in L^- \mid \neg a \in (I_b)^\delta\}\\
    &= \{b \in L^- \mid (I_b)^\delta \in \widehat{\neg a}\}.
\end{align*} 

\noindent This shows that the correspondence $b \mapsto (I_b)^\delta$ is a homeomorphism between $\Xi(L)$ and $\Lambda(B(L))$. It follows that $\Xi$ lifts to a contravariant functor from $\cat{KRFrm}$ to $\cat{dVS}$ and that we have the following theorem:

\begin{theorem}\label{chfis}
For any compact regular frame $L$, $L$ is isomorphic to $w\ORO(\Xi(L))$. Conversely, any $dV$-space $(X,\tau)$ is homeomorphic to $\Xi(w\ORO(X))$. Moreover, $w\ORO$ and $\Xi$ establish a duality between $\cat{KRFrm}$ and $\cat{dVS}$.
\end{theorem}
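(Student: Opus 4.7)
The plan is to derive Theorem \ref{chfis} by composing the duality of Theorem \ref{mainthm} with the equivalence of Theorem \ref{gur}. Concretely, I would define the functorial action on morphisms by setting $\Xi := \Lambda \circ B$ and $w\ORO := \mathfrak{R} \circ \Phi$. Since the composition of a dual equivalence with an equivalence of categories is again a dual equivalence, the overall duality between $\cat{KRFrm}$ and $\cat{dVS}$ follows immediately, as do the object-level isomorphisms:
\[ w\ORO(\Xi(L)) = \mathfrak{R}(\Phi(\Lambda(B(L)))) \cong \mathfrak{R}(B(L)) \cong L \]
by Theorems \ref{mainthm} and \ref{gur}, and dually $\Xi(w\ORO(X)) \cong X$ for any $dV$-space $(X,\tau)$.

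To make this approach rigorous, I need to reconcile these composite definitions of $\Xi$ and $w\ORO$ with the intrinsic descriptions given in the text. For $\Xi$, the computation immediately above the theorem already establishes that the assignment $b \mapsto (I_b)^\delta$ is a homeomorphism $\Xi(L) \to \Lambda(B(L))$ for each compact regular frame $L$; the first thing I would check is that this family of homeomorphisms is natural in $L$, so that $\Xi$ as defined intrinsically is naturally isomorphic to $\Lambda \circ B$ as a functor. For $w\ORO$, Lemma \ref{round} provides, for each de Vries algebra $V$, an order isomorphism $\alpha_V : \mathfrak{R}(V) \to w\ORO(\Lambda(V))$, and I would verify that this isomorphism is natural in $V$; together with the duality of Theorem \ref{mainthm}, this yields $w\ORO \cong \mathfrak{R} \circ \Phi$ also on morphisms.

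The main obstacle is this naturality check, in particular on the $w\ORO$ side. Given a $dV$-map $f : (X,\tau_1) \to (Y,\tau_2)$, one expects the induced morphism $w\ORO(f) : w\ORO(Y) \to w\ORO(X)$ to be given by $U \mapsto \orint\orclo f\inv[U]$ (using weak density of $f$ to ensure the image is again well-rounded), and the task is to verify that this matches $\mathfrak{R}(\Phi(f))$ under $\alpha$. Unpacking, this reduces to checking that for any $U \in w\ORO(Y)$, the round ideal $\beta_X(w\ORO(f)(U)) = \{b \in B(\RO(X)) \mid \overline{\widehat{b}} \sset \orclo f\inv[U]\}$ coincides with $\mathfrak{R}(\Phi(f))(\beta_Y(U))$, which amounts to a direct calculation using weak density and the fact, recorded just after Lemma \ref{mor1}, that $\orclo f\inv[V] = f\inv[\orclo V]$ for upward-closed $V$. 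Once these naturality conditions are confirmed, everything composes cleanly, and the duality between $\cat{KRFrm}$ and $\cat{dVS}$ is inherited from the established duality and equivalence.
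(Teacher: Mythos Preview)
Your proposal is correct and matches the paper's own approach: the paper does not give a separate proof of Theorem \ref{chfis} but presents it as the immediate consequence of composing Theorem \ref{mainthm} with Theorem \ref{gur}, using Lemma \ref{round} and the computation preceding the theorem to identify $w\ORO$ with $\mathfrak{R}\circ\Phi$ and $\Xi$ with $\Lambda\circ B$. If anything, you are more scrupulous than the paper in flagging the naturality checks, which the text simply absorbs into the phrase ``it follows that $\Xi$ lifts to a contravariant functor.''
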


We may think of Theorem \ref{chfis} as establishing a choice-free analogue of Isbell duality. In the presence of (BPI), any compact regular frame is spatial, meaning that any compact regular frame $L$ is isomorphic to $\Omega(pt(L))$, or equivalently that any compact regular frame is the lattice of open sets of some compact Hausdorff space. In our choice-free case, we do not represent $L$ as the open sets of a topological space (since doing so would imply Isbell duality), but only as the well-rounded order-regular open sets of a $dV$-space. We might however be interested in better understanding the relationship between the Isbell dual of a compact regular frame and its de Vries dual. The answer turns out to involve the upper Vietoris functor on compact regular frames.

Recall that the Vietoris hyperspace of a compact Hausdorff space $(X,\tau)$ is obtained by taking as points the closed subsets of $X$. That a Vietoris-like construction would play a role in our duality is far from surprising. De Vries had already remarked \cite[Theorem I.3.12]{devries1962compact} that there was a dual order-isomorphism between the closed sets of a compact Hausdorff space and the concordant filters on its de Vries algebra of regular open sets. Moreover, assuming (BPI), the dual $UV$-space of a Boolean algebra $B$ is homeomorphic to the upper Vietoris hyperspace of the dual Stone space of $B$ \cite[Theorem 7.7]{hol19}. The upper Vietoris construction can also be defined on compact regular locales \cite{hol19,Johnstone,johnstone2020vietoris}:

\begin{definition}
Let $L$ be a compact regular locale. The \textit{upper Vietoris space} of $L$ is the topological space $UV(L) = (L^-,\tau_\Box)$, where $\tau_\Box$ is the topology generated by the sets $\Box a = \{ b \in L^- \mid a \jo b = 1_L\}$ for any $a \in L$.
\end{definition}

\begin{lemma}
For any locale $L$, $\Xi(L)$ is homeomorphic to $UV(L)$.
\end{lemma}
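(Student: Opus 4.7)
The plan is to show that the identity map on $L^-$ is a homeomorphism between $\Xi(L)$ and $UV(L)$; since the underlying sets agree, it suffices to verify that the two topologies $\delta$ and $\tau_\Box$ coincide.

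The easy direction $\delta \sset \tau_\Box$ would follow by simply unfolding the basic opens of $\Xi(L)$. Using that $c \prec b$ in the frame $L$ means $b \jo \neg c = 1_L$, one computes
\[ \check{a} = \{b \in L^- \mid \neg a \prec b\} = \{b \in L^- \mid b \jo \neg\neg a = 1_L\} = \Box(\neg\neg a), \]
which is a basic open of $\tau_\Box$, so every $\check{a}$ lies in $\tau_\Box$.

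For the reverse inclusion, I would establish the identity
\[ \Box c = \bigcup \{\check{r} \mid r \in B(L),\ r \prec c\}, \]
where $B(L)$ is the Booleanization. The inclusion $\rset$ is immediate from the previous computation together with $r \leq c$, since for regular $r$ we have $\check{r} = \Box(\neg\neg r) = \Box r \sset \Box c$. The substantive inclusion $\sset$ rests on two ingredients. First, in a compact regular frame, $c = \bigjo \{r \in B(L) \mid r \prec c\}$: starting from regularity ($c = \bigjo \{d \mid d \prec c\}$), for any $d \prec c$ interpolation gives $d \prec e \prec c$, and then $\neg\neg e \in B(L)$ satisfies $d \leq \neg\neg e \prec c$ since $\neg\neg\neg e = \neg e$. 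Second, given $b \in \Box c$, compactness of $L$ applied to $b \jo \bigjo \{r \in B(L) \mid r \prec c\} = 1_L$ supplies a finite subjoin, and a short distributivity argument shows that $\{r \in B(L) \mid r \prec c\}$ is closed under finite $B(L)$-joins, so a single $r \in B(L)$ with $r \prec c$ satisfies $b \jo r = 1_L$, i.e., $b \in \Box r = \check{r}$.

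The main obstacle will be the direction $\tau_\Box \sset \delta$: one cannot naïvely replace a generic $d \prec c$ by the regular element $\neg\neg d$, since $\Box(\neg\neg d)$ need not be contained in $\Box c$ when $d$ fails to be regular. The correct move is to first interpolate $d \prec e \prec c$ and then pass to $\neg\neg e$, which is both regular and still rather-below $c$; it is this interaction between interpolation, $\neg\neg$-closure, and compactness of $L$ that makes the argument go through.
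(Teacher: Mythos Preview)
Your proof is correct and follows essentially the same strategy as the paper's: both establish $\check{a} = \Box(\neg\neg a)$ for the inclusion $\delta \sset \tau_\Box$, and both obtain the reverse inclusion by writing $\Box c$ as a union of basic $\delta$-opens via regularity and compactness of $L$. The only organizational difference is that you restrict the union to regular $r \prec c$ from the outset (hence the interpolation step), whereas the paper ranges over all $b \prec c$ and passes to the regular element $\neg\neg(b_1 \jo \dotsb \jo b_n)$ only after invoking compactness.
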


\begin{proof}
Since $\Xi(L)$ and $UV(L)$ have the same domain, it is enough to show that the two topologies coincide. For any $a \in L$:
\begin{align*}
    \check{a} &= \{b \in L^- \mid \neg a \prec b\}\\
    &= \{ b \in L^- \mid \neg\neg a \jo b = 1_L\}\\
    &= \Box \neg\neg a,
\end{align*}
which shows that $\delta \sset \tau_\Box$. Conversely, I claim that for any $a \in L$, \[\Box a = \bigcup_{b \prec a} \check{b}=\{ c \in L \mid \exists b \prec a :\neg b \prec c\}.\] To see this, notice first that if $\neg b \prec c$ for some $b \prec a$, then $c \jo \neg\neg b = 1_L$ and $\neg \neg b \leq a$, which implies that $a \jo c = 1_L$. This shows the right-to-left inclusion. For the converse, suppose that $a \jo c = 1_L$. Since $L$ is regular, $a = \bigjo \{b \in L \mid b \prec a\}$, and hence $1_L = \bigjo\{b \jo c \mid b \prec a\}$. Since $L$ is also compact, this means that there are $b_1,...,b_n$ such that $b_1\jo...\jo b_n \prec a$ and $c \jo b_1 \jo... \jo b_n = 1_L$.  Letting $b = \neg\neg(b_1\jo...\jo b_n)$, it follows that $b \prec a$ and that $\neg b \prec c$. This shows that $\Box a = \bigcup_{b \prec a} \check{b}$, and therefore that $\tau_\Box \sset \delta$.
\end{proof}

As an immediate corollary of the previous lemma, we obtain the following characterization of $dV$-spaces, which can be seen as a generalization of Theorem 7.7 in \cite{hol19}:

\begin{theorem}
A topological space is a $dV$-space if and only if it is homeomorphic to the upper Vietoris space of a compact regular locale.
\end{theorem}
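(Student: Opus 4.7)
The theorem follows almost immediately by chaining the previous lemma with Theorem~\ref{chfis}, so the plan is essentially to set up both implications as a short composition of known equivalences and verify that nothing extra is required.

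For the ``if'' direction, suppose $(X,\tau)$ is homeomorphic to $UV(L)$ for some compact regular locale $L$. The previous lemma tells us that $UV(L)$ and $\Xi(L)$ are homeomorphic (they share the underlying set $L^-$ and generate the same topology). By Theorem~\ref{chfis}, $\Xi(L)$ is a $dV$-space, as $\Xi$ is the object part of a duality functor from $\cat{KRFrm}$ to $\cat{dVS}$. Homeomorphism being transitive, $(X,\tau)$ is itself a $dV$-space.

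For the ``only if'' direction, suppose $(X,\tau)$ is a $dV$-space. By Theorem~\ref{chfis}, $(X,\tau)$ is homeomorphic to $\Xi(L)$ where $L = w\ORO(X)$ is a compact regular frame (Lemma~\ref{round} identifies $L$ with $\mathfrak{R}(\Phi(X,\tau))$, which is compact regular by the discussion preceding Theorem~\ref{gur}). Applying the previous lemma again, $\Xi(L)$ is homeomorphic to $UV(L)$, so $(X,\tau)$ is homeomorphic to the upper Vietoris space of a compact regular locale, as required.

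No real obstacle arises: the content of the theorem has already been absorbed into Theorem~\ref{chfis} (which encapsulates the choice-free analogue of Isbell duality) and into the preceding lemma (which identifies the $\check{a}$-topology with the box-topology generated by the $\Box a$'s). The only thing to check when writing it up is that the compact regular locale produced in the ``only if'' direction is indeed a locale in the sense the statement requires, which is clear because $w\ORO(X)$ is a compact regular frame and locales are just frames viewed in the opposite category.
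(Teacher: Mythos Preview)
Your argument is correct and matches the paper's own treatment: the paper states the theorem as an immediate corollary of the preceding lemma (identifying $\Xi(L)$ with $UV(L)$) together with Theorem~\ref{chfis}, and you have simply spelled out both directions of that corollary explicitly.
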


Let us conclude this section by noting that connections between de Vries duality and the Vietoris functor on compact Hausdorff spaces have already been studied in \cite{bezhanishvili2015modalcompact,bezhanishvili2015modal}. In particular, the authors define modal de Vries algebras and prove that they are the duals of coalgebras of the Vietoris functor. For lack of space, we leave as an open problem the relationship between modal de Vries algebras and $dV$-spaces.

\section{Two Applications}\label{app}
We conclude by briefly mentioning two straightforward applications of the duality presented here. The first one is a choice-free version of Tychonoff's Theorem for compact Hausdorff spaces and the second one deals with the topological semantics of the strong implication calculus defined in \cite{hol19}.
\subsection{The Choice-free Product of Compact Hausdorff Spaces}
The following is a well-known result in point-free topology \cite{Johnstone,johnstone1981tychonoff,picado2011frames}:
\begin{lemma}
The category $\cat{KRFrm}$ is closed under coproducts.
\end{lemma}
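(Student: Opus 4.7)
The plan is to take a family $\{L_i\}_{i \in I}$ of compact regular frames, form its coproduct $L = \bigoplus_{i \in I} L_i$ in $\cat{Frm}$ (which exists since $\cat{Frm}$ is cocomplete, the construction being a suitable quotient of a tensor product), and verify that $L$ is again compact and regular. The coproduct is equipped with frame morphisms $\iota_i : L_i \to L$, and $L$ is generated as a frame by the elements $\iota_i(a)$ for $i \in I$ and $a \in L_i$.

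For regularity, the key observation is that any frame morphism $f : K \to K'$ preserves $\prec$: if $b \jo \neg a = 1_K$, then from $f(\neg a) \me f(a) = f(\neg a \me a) = 0$ we obtain $f(\neg a) \leq \neg f(a)$, whence $f(b) \jo \neg f(a) \geq f(b) \jo f(\neg a) = 1_{K'}$. A short distributivity calculation also shows that $\prec$ is closed under finite meets in any frame. With these facts in hand, regularity of each $L_i$ gives $\iota_i(a) = \bigjo\{\iota_i(b) \mid b \prec a\}$, and distributivity propagates this through finite meets of generators: every basic element of the form $\bigme_k \iota_{i_k}(a_k)$ is then a join of elements rather-below it, and taking arbitrary joins extends this to every element of $L$, establishing regularity.

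For compactness, this is the restriction of Johnstone's localic Tychonoff theorem to compact frames, which is choice-free; the standard references are \cite{Johnstone,johnstone1981tychonoff,picado2011frames}. The argument proceeds in two stages. First, for the binary case $L_1 \oplus L_2$, given a cover of $1_L$ by basic elements $\iota_1(a_\lambda) \me \iota_2(b_\lambda)$, a double application of compactness -- once in one factor after partially evaluating in the other -- extracts a finite subcover. Second, the infinite case reduces to the binary one by an induction using the fact that every basic element of $\bigoplus_{i \in I} L_i$ already lives inside the finite sub-coproduct $\bigoplus_{i \in F} L_i$ for some finite $F \sset I$, so any cover of $1_L$ by basic elements can be refined to a cover that witnesses compactness through the finite sub-coproducts.

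The main obstacle is the compactness step, specifically the binary case: the combinatorics of extracting finite subcovers inside the tensor product is delicate, and it is precisely here that the classical product-of-compacta argument needs the Axiom of Choice while its point-free counterpart does not. Regularity, by contrast, is a routine consequence of the good behavior of $\prec$ under frame morphisms and finite meets, and the argument is entirely algebraic.
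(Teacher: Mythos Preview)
The paper does not prove this lemma at all: it simply states it as ``a well-known result in point-free topology'' and cites \cite{Johnstone,johnstone1981tychonoff,picado2011frames}. Your proposal goes further and sketches the standard argument one finds in those references---regularity via preservation of $\prec$ under frame morphisms and its stability under finite meets, compactness via Johnstone's choice-free localic Tychonoff theorem---so in that sense you are supplying what the paper omits rather than diverging from it. Your outline is correct; the regularity half is complete as stated, while for compactness you rightly flag the binary-coproduct step as the delicate point and defer to the literature, which is exactly what the paper does wholesale.
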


By the duality obtained in the previous section, this means that the category of $dV$-spaces is closed under products. This means that a version of Tychonoff's Theorem for $dV$-spaces (the product in $\cat{dVS}$ of a family of $dV$-spaces is compact) holds in a choice-free setting. Moreover, this also motivates the following definition. 

\begin{definition}
Let $\{(X_i, \tau_i)\}_{i \in I}$ be a family of compact Hausdorff spaces. The \textit{choice free product} of this family is the $dV$-space $\Xi(\bigoplus_{i \in I} \Omega(X_i))$.
\end{definition}

As an immediate consequence of the results in the previous section, we get the following choice-free Tychonoff Theorem for compact Hausdorff spaces: 
\begin{theorem} \label{tych}
The choice-free product of a family of compact Hausdorff spaces $\{(X_i, \tau_i)\}_{i \in I}$ is compact. Moreover, under (BPI), it is homeomorphic to the upper-Vietoris space of $\prod_{i \in I} (X_i,\tau_i)$.
\end{theorem}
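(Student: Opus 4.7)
The plan is to decompose the theorem into its two assertions and dispatch each one as a short, essentially functorial consequence of the results already established in Sections \ref{hyp} and the beginning of Section \ref{app}.

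For the first claim, set $L = \bigoplus_{i \in I} \Omega(X_i)$. Since each $(X_i,\tau_i)$ is compact Hausdorff, each $\Omega(X_i)$ is a compact regular frame, so the preceding lemma (closure of $\cat{KRFrm}$ under coproducts) tells us that $L$ itself is a compact regular frame. Now apply Theorem \ref{chfis}: the functor $\Xi$ lands in $\cat{dVS}$, so $\Xi(L)$ is a $dV$-space, and by the very definition of a $dV$-space it is compact. Thus the choice-free product $\Xi(\bigoplus_{i \in I} \Omega(X_i))$ is compact, and this argument does not invoke (BPI) anywhere.

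For the second claim, I would assume (BPI) and invoke Isbell duality (Theorem \ref{isbell}), which says that $\Omega : \cat{KHaus} \to \cat{KRFrm}$ is (half of) a contravariant equivalence. Because contravariant equivalences send products to coproducts, we obtain a frame isomorphism
\[
\bigoplus_{i \in I} \Omega(X_i) \;\cong\; \Omega\bigl(\textstyle\prod_{i \in I} X_i\bigr).
\]
Applying the functor $\Xi$ to both sides and using the lemma proved just before this section, which gives $\Xi(L) \cong UV(L)$ for any compact regular locale $L$, we get
\[
\Xi\bigl(\textstyle\bigoplus_{i \in I} \Omega(X_i)\bigr) \;\cong\; \Xi\bigl(\Omega(\textstyle\prod_{i \in I} X_i)\bigr) \;\cong\; UV\bigl(\Omega(\textstyle\prod_{i \in I} X_i)\bigr),
\]
which is by definition the upper-Vietoris space of $\prod_{i \in I} (X_i,\tau_i)$.

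I do not expect a real obstacle here; the proof is largely the composition of the named results. The one point that deserves a line of justification is the passage from $\Omega$ being an equivalence to $\Omega$ sending Tychonoff products to frame coproducts, but this is immediate from the universal property of (co)products in any duality. Everything choice-related is concentrated in that single appeal to Isbell duality in the second half of the theorem; the compactness assertion is fully constructive.
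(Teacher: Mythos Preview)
Your proposal is correct and matches the paper's approach exactly: the paper itself offers no explicit proof, presenting the theorem as ``an immediate consequence of the results in the previous section,'' and your argument spells out precisely that chain of implications (closure of $\cat{KRFrm}$ under coproducts, Theorem~\ref{chfis} for compactness, then Isbell duality plus $\Xi\cong UV$ under (BPI) for the second claim). The only cosmetic addition you might make is a one-line remark that $UV(\Omega(X))$ coincides with the classical upper Vietoris hyperspace of $X$ via the complementation bijection between proper opens and nonempty closed sets, but this is implicit in the paper's discussion preceding the definition of $UV(L)$.
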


It is worth contrasting this result to one that can be obtained using Isbell duality. Since the category of compact regular frames is closed under coproducts, it can be proved without appealing to the Axiom of Choice that the coproduct of the frames of opens of any family $\{(X_i, \tau_i)\}_{i \in I}$ of compact Hausdorff spaces is a compact frame. Under (BPI), this frame is precisely the frame of opens of the product of $\{(X_i, \tau_i)\}_{i \in I}$ in the category of topological spaces. In the absence of (BPI) however, it may fail to be spatial. We may therefore see Theorem \ref{tych} as a \textit{semi-pointfree} version of Tychonoff's Theorem, that is choice-free yet remains spatial.

\subsection{Topological Completeness for the Symmetric Strong Implication Calculus}

De Vries duality has been used in \cite{bezhanishvili2019strict} to prove that the Symmetric Strong Implication Calculus $\mathsf{S^2IC}$ is sound and complete with respect to the class of compact Hausdorff spaces. This calculus is obtained by adding a binary relation $\rightsquigarrow$ to the language of classical propositional calculus, to be interpreted as a \textit{strong implication} connective. Given a contact algebra $(B, \prec)$, one can interpret the strong implication connective by letting $a \rightsquigarrow b = 1_B$ if $a \prec b$ and $a \rightsquigarrow b = 0$ otherwise. This gives rise to a binary normal and additive operator $\Delta(a,b) := \neg(a \rightsquigarrow \neg b)$, meaning that one may think of the pair $(B, \rightsquigarrow)$ as a BAO. For details on the axiomatization of $\mathsf{S^2IC}$, we refer to \cite{bezhanishvili2019strict}.
In order to provide a choice-free topological semantics for $\mathsf{S^2IC}$, we introduce the following notion:

\begin{definition}
A \textit{de Vries topological model} is a triple $(X,\tau,V)$ such that $(X, \tau)$ is a $dV$-space, and $V$ is a valuation such that for any formulas $\varphi$, $\psi$ of $\mathsf{S^2IC}$:
\begin{itemize}
    \item If $\varphi$ is propositional letter $p$, then $V(\varphi) \in \RO(X)$;
    \item $V(\neg \varphi) = V(\varphi)^\bot$ and $V(\varphi \me \psi) = V(\varphi) \cap V(\psi)$;
    \item $V(\varphi \rightsquigarrow \psi) = X$ if $\overline{V(\varphi)}\sset \orclo V(\psi)$ and $V(\varphi \rightsquigarrow \psi) = \emptyset$ otherwise.
\end{itemize}
A formula $\varphi$ is \textit{valid} on a $dV$-space $(X,\tau)$ iff $V(\varphi)=X$ for any de Vries topological model $(X,\tau,V)$.
\end{definition}

As a consequence of Theorem \ref{mainthm}, we have the following result, which does not assume the Axiom of Choice:

\begin{theorem}
The system $\mathsf{S^2IC}$ is sound and complete with respect to the class of all $dV$-spaces.
\end{theorem}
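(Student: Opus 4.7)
The plan is to reduce topological completeness for $\mathsf{S^2IC}$ on $dV$-spaces to algebraic completeness on de Vries algebras, and to transfer between the two settings via the duality of Theorem \ref{mainthm}. The bridge is the observation that, for any de Vries topological model $(X,\tau,V)$, the semantic clauses coincide with the algebraic interpretation of $\mathsf{S^2IC}$ on the de Vries algebra $(\RO(X),\ll)$.

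First, I would show by induction on $\varphi$ that $V(\varphi) \in \RO(X)$ in every de Vries topological model. This holds for propositional letters by definition, is preserved by $\cap$ and by $(-)^\bot$ (which are precisely meet and negation in the Boolean algebra $\RO(X)$), and is trivial in the case $V(\varphi\rightsquigarrow\psi) \in \{\emptyset, X\}$. Hence $V$ factors through an algebraic valuation $\tilde{V}$ on $(\RO(X),\ll)$, and by the definition of $\ll$, the clause ``$V(\varphi\rightsquigarrow\psi)=X$ iff $\overline{V(\varphi)}\sset\orclo V(\psi)$'' is exactly the algebraic interpretation of strong implication on $(\RO(X),\ll)$.

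Soundness then follows from the algebraic soundness of $\mathsf{S^2IC}$ with respect to de Vries algebras established in \cite{bezhanishvili2019strict}: if $\vdash\varphi$, then $\tilde{V}(\varphi)=1$ in every algebraic model, so $V(\varphi)=X$ on every $dV$-space. For completeness, suppose $\not\vdash\varphi$. By the algebraic completeness of $\mathsf{S^2IC}$ with respect to de Vries algebras, also established in \cite{bezhanishvili2019strict}, there is a de Vries algebra $V_0=(B,\prec)$ and a valuation $v$ with $v(\varphi)\neq 1$. By Theorem \ref{obj1}, $a\mapsto\widehat{a}$ is an isomorphism from $V_0$ onto $(\RO(S_{V_0}),\ll)$. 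Setting $V'(p)=\widehat{v(p)}$ for each propositional letter $p$ gives a de Vries topological model on $(S_{V_0},\sigma)$, and a straightforward induction using the correspondence above yields $V'(\psi)=\widehat{v(\psi)}$ for every formula $\psi$. In particular, $V'(\varphi)\neq S_{V_0}$, so $\varphi$ is refuted on the $dV$-space $(S_{V_0},\sigma)$.

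The main point to verify is that the algebraic completeness of $\mathsf{S^2IC}$ with respect to de Vries algebras in \cite{bezhanishvili2019strict} is itself choice-free. That argument builds a Lindenbaum--Tarski compingent algebra from a consistent theory and then passes to its MacNeille completion to obtain a de Vries algebra; since both constructions are universal and avoid (BPI), no hidden appeal to the Axiom of Choice is introduced. With this verified, combining algebraic completeness with Theorem \ref{obj1} yields topological completeness without any choice principle, in contrast to the proof in \cite{bezhanishvili2019strict} which obtains completeness only via the classical, (BPI)-dependent passage to compact Hausdorff spaces.
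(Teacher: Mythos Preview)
Your proposal is correct and follows essentially the same route as the paper: both reduce the topological result to the choice-free algebraic soundness and completeness of $\mathsf{S^2IC}$ over de Vries algebras (Theorem~5.10 and Remark~5.11 of \cite{bezhanishvili2019strict}) and then transfer via the representation/duality. You spell out the bridge more explicitly---showing that a de Vries topological valuation is exactly an algebraic valuation on $(\RO(X),\ll)$, and invoking Theorem~\ref{obj1} rather than the full Theorem~\ref{mainthm} for the completeness direction---but this is just a more detailed rendering of the same argument.
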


\begin{proof}
By Theorem 5.10 and Remark 5.11 in \cite{bezhanishvili2019strict}, de Vries algebras provide a sound and complete algebraic semantics for $\mathsf{S^2IC}$, and this result can be obtained choice-free. Combining this result with Theorem \ref{mainthm}, it follows that $dV$-spaces also provide a choice-free sound and complete semantics for $\mathsf{S^2IC}$.
\end{proof}

Since $dV$-spaces constitute a choice-free, filter-based representation of de Vries algebras, we may think of our choice-free de Vries duality as providing a possibility semantics for the logic of region-based theories of space, just as the choice-free Stone duality through $UV$-spaces serves as a foundation for possibility semantics for classical and modal propositional logic \cite{holposs,Hol16,holliday2019complete}.

\section*{Acknowledgments}
I thank Wes Holliday and an anonymous referee for helpful comments that greatly improved the clarity of the paper.

\bibliographystyle{aiml22}
\bibliography{aiml22}

\end{document}